%
%

\documentclass[12pt,a4paper,oneside]{amsart}
\usepackage[a4paper]{geometry}
\geometry{hmargin=1.8cm,top=2cm,bottom=2.5cm}

\usepackage{txfonts}
\DeclareMathAlphabet{\mathcal}{OMS}{cmsy}{m}{n} 

\usepackage{amssymb,enumerate,perpage}


\MakePerPage[2]{footnote}

\newtheorem{theorem}{Theorem}
\newtheorem{lemma}{Lemma}[section]
\newtheorem{proposition}{Proposition}[section]

\theoremstyle{definition}
\newtheorem{definition}{Definition}[section]

\numberwithin{equation}{section}

\setcounter{section}{-1}

\newcommand{\vertbar}{\>|\>}
\newcommand{\set}[2]{\ensuremath{\{ #1 \vertbar #2 \}}}
\def\liebrack  {\ensuremath{[\,\cdot\, , \cdot\,]}}

\DeclareMathOperator{\ad}{\mathsf{ad}}
\DeclareMathOperator{\Ad}{Ad}
\DeclareMathOperator{\Ann}{Ann}
\DeclareMathOperator{\dcobound}{d}
\DeclareMathOperator{\End}{End}
\DeclareMathOperator{\id}{id}
\DeclareMathOperator{\Ker}{Ker}

\begin{document}

\title{Ado Theorem for nilpotent Hom-Lie algebras}

\author{Abdenacer Makhlouf}
\address{Universit\'e de Haute-Alsace, Mulhouse, France}
\email{abdenacer.makhlouf@uha.fr}

\author{Pasha Zusmanovich\,$^\dagger$}
\address{University of Ostrava, Ostrava, Czech Republic}
\email{pasha.zusmanovich@osu.cz}

\date{First written July 26, 2018; last revised July 7, 2019}
\thanks{Intern. J. Algebra Comp., to appear; arXiv:1807.10944}
\thanks{$^\dagger$\,Corresponding author}
\keywords{
Hom-Lie algebra; Hom-associative algebra; nilpotent algebra;
faithful representation}
\subjclass[2010]{17A99; 16G99; 17B01; 17B10; 17B30} 

\begin{abstract}
We prove an analog of the Ado theorem -- the existence of a finite-dimensional 
faithful representation -- for a certain kind of finite-dimensional nilpotent 
Hom-Lie algebras.
\end{abstract}

\maketitle

\section{Introduction}

During the last decade, a new class of algebras has been studied extensively in
the literature: the so-called Hom-Lie algebras, anticommutative algebras with 
the Jacobi identity ``twisted'' by an endomorphism (for a precise definition, 
see \S \ref{sec-p}). The impetus came from mathematical physics, where 
particular instances of such structures appeared as symmetries of some physical
models, and as $\sigma$-deformations. As it happens, after that people started 
to study Hom-Lie (and related) algebras on their own. A lot of interesting
results were obtained recently, we mention here just a few: 
structure theory of simple and semisimple Hom-Lie algebras (\cite{chen-han});
a Hom-analog of classification of filiform Lie algebras (\cite{mm});
cohomology and deformation theory of various classes of Hom-algebras \`a la 
Gerstenhaber, with applications to Hom-analogs of the known constructions 
related to the Witt algebra (\cite{arfa,makhlouf-silv}); Hom-Lie structures on 
Kac-Moody algebras (\cite{mz}); Hom-analogs of some interesting identities in alternative algebras 
(\cite{yau-mikheev}).

When one tries to extend Lie-algebraic constructions and results to the Hom-Lie
case, the in\-no\-cen\-t\-ly-looking, at the first glance, twist often leads to
substantial difficulties of combinatorial character, and the theory of Hom-Lie 
algebras appears to be, generally, more difficult than its ``ordinary'' Lie 
analog. For example, presently it is not known whether any Hom-Lie algebra 
embeds into its universal enveloping algebra, or, more generally, whether an 
analog of the Poincar\'e--Birkhoff--Witt theorem for Hom-Lie algebras holds (see a discussion in \cite[\S 2.3]{hellstrom-et-al}; the
Poincar\'e--Birkhoff--Witt theorem is known to be true only in a very 
particular case of involutive Hom-Lie algebras -- i.e. those for which the square of the twist map 
is identity -- see \cite{gzz}).

In this paper we consider in the Hom-Lie setting another classical Lie-algebraic
result, closely connected with the Poincar\'e--Birkhoff--Witt theorem: the Ado 
theorem which claims the existence of a finite-di\-men\-si\-o\-nal faithful 
representation of any finite-dimensional Lie algebra. The classical proof of the
Ado theorem involves universal enveloping algebras. The analogous construction 
in the Hom-Lie setting is, apparently, much more complicated than in the Lie 
case, and presently is not very well understood (see, again, 
\cite{hellstrom-et-al}, and also \cite{caen-goyv} and \cite{hom-hopf}). That is
why we choose to follow another route, presented in \cite{ado}, which avoids 
usage of universal enveloping algebras and stays entirely inside the category of
finite-dimensional algebras. The drawback of this approach that is works only
for nilpotent algebras and in characteristic zero, so, strictly speaking -- and 
from the historical perspective -- our Hom-Lie result is an analog not of the 
Ado theorem, but of the earlier result: the Birkhoff theorem, \cite{birkhoff},
guaranteeing the existence of a finite-dimensional faithful representation of a
\emph{nilpotent} finite-dimensional Lie algebra.

The contents of the paper are as follows. The first four sections are 
preliminary in nature, and treat elementary concepts related to the Hom-Lie 
theory: representations (\S\S \ref{sec-p} and \ref{sec-ass}), nilpotent algebras
(\S \ref{sec-nilp}), and free algebras (\S \ref{sec-free}). Most of this is 
based on a material existing in the literature, or a variation thereof; but 
because the Hom-Lie theory is still in the formative period, we found it 
necessary to fix the basic concepts and the terminology. Among other things, we
define the tensor product in the category of representations of a multiplicative
Hom-Lie algebra (Proposition \ref{f-tens}); prove that, similarly with the Lie 
case, the existence of a faithful representation is equivalent to the embedding
into a Hom-associative algebra (Theorem \ref{th-a}); and prove that two notions
of nilpotency coincide for multiplicative Hom-Lie algebras over algebraically 
closed fields (Proposition \ref{prop-nilp}). Finally, in \S \ref{sec-ado} we 
prove the Ado theorem for a certain class of nilpotent finite-dimensional 
Hom-Lie algebras -- the so-called multiplicative nondegenerate algebras (for the
precise definitions, see \S 1), defined over an algebraically closed field of 
characteristic zero (Theorem \ref{th-ado}). All the restrictions on the 
structure of algebra are stipulated by our chosen approach, and it remains a 
challenging problem to establish the Ado theorem for a broader class of 
finite-dimensional Hom-Lie algebras -- not necessarily nilpotent, or 
multiplicative, or nondegenerate, or defined over an arbitrary ground field.

Throughout the paper, ``algebra'' means a not necessarily Lie, or associative, 
or satisfying any other distinguished identity, algebra. $\End(V)$ denotes the 
vector space of linear maps from a vector space $V$ to itself. Direct sums, 
denoted by $\oplus$, are understood in the category of vector spaces. The ground
field is assumed arbitrary, unless stated explicitly otherwise. $\mathbb N$ 
denotes the set of all positive integers.

\section{Hom-Lie algebras and their representations}\label{sec-p}

In this section we recall the basic definitions and facts related to 
Hom-Lie algebras and their representations; for the most part, we just 
meticulously record all variations of the relevant concepts, the more so there 
is still some ambiguity in that regard in the literature, and divide everything
into small trivial steps. Perhaps, the only result which can be called new here,
if only conditionally, is the construction of the tensor product of 
representations of a Hom-Lie algebra (Proposition \ref{f-tens}), which is a 
straightforward generalization of the Lie case; but it is implicitly contained 
in \cite{hom-hopf} (and, in the regular case, in \cite{caen-goyv}), where it is
proved that the universal enveloping algebra of a multiplicative Hom-Lie algebra
is Hom-Hopf, suitably defined.

\begin{definition}
A pair $(V,\alpha)$, where $V$ is a vector space, and $\alpha: V \to V$ is a 
linear map, is called a \emph{Hom-vector space}, and the map $\alpha$ is called
a \emph{twist map}. A Hom-vector space with a nondegenerate (i.e., having zero 
kernel) twist map is called \emph{nondegenerate}. A \emph{Hom-subspace} of the 
Hom-vector space $(V,\alpha)$ is a vector subspace of $V$ invariant under 
$\alpha$.
\end{definition}

\begin{definition}
A triple $(A,\cdot,\alpha)$, where $(A,\cdot)$ is an algebra, and $(A,\alpha)$
is a Hom-vector space, is called a \emph{Hom-algebra}. If $\alpha$ is a 
homomorphism of the algebra structure, i.e.
$$
\alpha(x \cdot y) = \alpha(x) \cdot \alpha(y)
$$
for any $x,y \in A$, then the Hom-algebra $(A,\cdot,\alpha)$ is called 
\emph{multiplicative}. If $(A,\alpha)$ is a nondegenerate Hom-vector space, then
the Hom-algebra $(A,\cdot,\alpha)$ is called \emph{nondegenerate}.
\end{definition}

In the literature there exists a close notion of a \emph{regular} Hom-algebra,
namely, a Hom-algebra which is multiplicative and whose twist map is invertible.
That is, in the finite-dimensional situation, which is our main concern here,
regular Hom-algebras are exactly those which are, in our terms, multiplicative 
\emph{and} nondegenerate. But when treating free algebras in \S~\ref{sec-free},
it is more convenient to deal with nondegeneracy rather than with invertibility,
for Hom-algebras with a nondegenerate twist map form a quasivariety, while 
Hom-algebras with an invertible twist map do not. (Still, occasionally we need 
to refer to the situation when the twist map is invertible and not merely nondegenerate, as in Lemmas~\ref{f-ass-end} and \ref{f-4}).

Also, we want to have multiplicativity and nondegeneracy as complementary 
notions, rather than later to be included in the former, for two reasons: first,
it better corresponds to the similar notions for representations (see 
Definitions~\ref{def-repr} and \ref{def-ass}), and, second, the desire to 
clearly isolate places where we need that or another restriction on the twist 
map in our proof of a variant of the Ado theorem in \S \ref{sec-ado}.

\begin{definition}
A \emph{subalgebra}, respectively \emph{ideal}, of the Hom-algebra 
$(A,\cdot,\alpha)$ is a subalgebra, respectively ideal of the algebra 
$(A,\cdot)$ which is simultaneously a Hom-subspace of $(A,\alpha)$.
\end{definition}

The quotient of a Hom-vector space by a Hom-subspace, the quotient of a 
Hom-algebra by an ideal, and extension of Hom-algebras are defined in the 
standard way.

Below, when speaking about the algebra or the ideal generated by a given set, we
will mean generation in the Hom-algebra sense, i.e. the minimal algebra or ideal
containing the given set and closed under multiplication \emph{and} the twist 
map.

By abuse of notation, and as it is customary in the case of ordinary (i.e., not
Hom) algebras, if the algebra multiplication and the twist map are not needed to
be specified explicitly, or are clear from the context, we will omit them from 
the Hom-algebra notation, and denote the Hom-algebra just by the underlying 
vector space, $A$.

\begin{definition}
A Hom-algebra $(L,\liebrack,\alpha)$ is called a \emph{Hom-Lie algebra}, if $L$
is anticommutative as an algebra, i.e.
$$
[y,x] = -[x,y] ,
$$
and the following \emph{Hom-Jacobi identity} holds:
$$
[[x,y],\alpha(z)] + [[z,x],\alpha(y)] + [[y,z],\alpha(x)] = 0
$$
for any $x,y,z \in L$.
\end{definition}

Various examples of Hom-Lie algebras can be found, among others, in 
\cite{benayadi-m}, \cite{hellstrom-et-al}, \cite{mm}, \cite{MS08}, and 
\cite{makhlouf-silv}.

\begin{definition}
A linear map $\varphi: A_1 \to A_2$ is a \emph{homomorphism} of a Hom-algebra 
$(A_1, \cdot_1, \alpha_1)$ to a Hom-algebra $(A_2, \cdot_2, \alpha_2)$, if 
$\varphi$ is a homomorphism of algebras, i.e.
$$
\varphi(x \cdot_1 y) = \varphi(x) \cdot_2 \varphi(y)
$$
for any $x,y \in A_1$, and $\varphi \circ \alpha_1 = \alpha_2 \circ \varphi$.
\end{definition}

An interesting construction, provided in \cite{Yau}, allows to build a Hom-Lie 
algebra starting from a Lie algebra and an algebra homomorphism. Namely, if
$(L,\liebrack)$ is a Lie algebra, and $\alpha$ a homomorphism of $L$, then 
$(L,\alpha \liebrack,\alpha)$ is a Hom-Lie algebra, called the \emph{Yau twist}
of $L$. Hom-Lie algebras obtained in this way are called algebras of Lie type. 
In particular, if $(L,\liebrack,\alpha)$ is a multiplicative nondegenerate 
Hom-Lie algebra, then $(L,\alpha^{-1}\liebrack)$ is a Lie algebra.

\begin{definition}
Let $(L, \liebrack, \alpha)$ be a Hom-Lie algebra, and $A$ an associative 
commutative algebra. A \emph{current Hom-Lie algebra} is a Hom-Lie algebra
$(L \otimes A, \liebrack, \widehat\alpha)$ defined on the tensor product 
$L \otimes A$, where the Lie bracket (denoted by the same symbol by abuse of 
notation) is defined by
$$
[x \otimes a, y \otimes b] = [x,y] \otimes ab
$$
for $x,y \in L$, $a,b \in A$, and the twist map is defined by
$$
\widehat{\alpha} (x \otimes a) = \alpha(x) \otimes a .
$$
\end{definition}

(Note parenthetically that this current algebra construction can be generalized
to the case where $A$ is also a Hom-algebra, and, further, to algebras over 
operads Koszul dual in some extended Hom sense, but we will not need this 
generality here; hopefully, it will be treated elsewhere).

\begin{lemma}\label{l-cur}
Let $L$ be a Hom-Lie algebra, and $A$ an associative commutative algebra. If
$L$ is multiplicative or nondegenerate, then so is the current Hom-Lie algebra
$L \otimes A$.
\end{lemma}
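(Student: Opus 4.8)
The plan is to verify the two assertions separately, both by direct computation, since the current algebra structure is prescribed on elementary tensors and the twist map is simply $\widehat\alpha = \alpha \otimes \id_A$.

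For multiplicativity, I would check the defining identity $\widehat\alpha([\xi,\eta]) = [\widehat\alpha(\xi), \widehat\alpha(\eta)]$ on a pair of elementary tensors $\xi = x \otimes a$ and $\eta = y \otimes b$, and then extend by bilinearity. The left-hand side equals $\widehat\alpha([x,y] \otimes ab) = \alpha([x,y]) \otimes ab$, while the right-hand side equals $[\alpha(x) \otimes a, \alpha(y) \otimes b] = [\alpha(x), \alpha(y)] \otimes ab$. If $L$ is multiplicative, then $\alpha([x,y]) = [\alpha(x), \alpha(y)]$, so the two expressions coincide; hence $L \otimes A$ is multiplicative.

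For nondegeneracy, I would observe that $\widehat\alpha = \alpha \otimes \id_A$ and argue that this map is injective whenever $\alpha$ is. Fixing a basis $\{e_j\}$ of $A$ over the ground field, every element of $L \otimes A$ is uniquely of the form $\sum_j x_j \otimes e_j$ with $x_j \in L$ (almost all zero), and $\widehat\alpha$ sends it to $\sum_j \alpha(x_j) \otimes e_j$. By uniqueness of this expansion, the image vanishes if and only if each $\alpha(x_j) = 0$; if $\alpha$ has zero kernel, then each $x_j = 0$, so $\widehat\alpha$ has zero kernel as well, and $L \otimes A$ is nondegenerate.

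None of the steps presents a genuine obstacle; the statement is essentially a bookkeeping verification, and the separation into the multiplicative and nondegenerate cases mirrors the way these two conditions were deliberately kept complementary in the definitions. The only point requiring a little care is the nondegeneracy claim, where one must use that tensoring an injective map with $\id_A$ preserves injectivity — which rests on the flatness of vector spaces over a field, made explicit here by the basis argument.
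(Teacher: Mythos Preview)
Your proposal is correct and is essentially the same as the paper's approach: the paper simply writes ``Obvious'', and your argument is precisely the routine verification on elementary tensors (for multiplicativity) and the injectivity of $\alpha \otimes \id_A$ (for nondegeneracy) that justifies this. There is nothing to add.
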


\begin{proof}
Obvious.
\end{proof}

\begin{definition}
Let $(L,\liebrack,\alpha)$ be a Hom-Lie algebra. A linear map $D: L \to L$ is 
called an \emph{$\alpha$-derivation of $L$}, if
\begin{equation}\label{eq-der}
D([x,y]) = [D(x),\alpha(y)] + [\alpha(x),D(y)]
\end{equation}
holds for any $x,y \in L$. 

If, additionally, $L$ is multiplicative, then, in addition to (\ref{eq-der}),
we require that
$$
D \circ \alpha = \alpha \circ D.
$$
\end{definition}

In the literature there exist several different definitions of derivation of a 
Hom-Lie algebra. Say, in \cite{makhlouf-silv} one defines $1$-coboundaries 
(i.e., in fact, derivations) as ordinary derivations of the algebra structure, 
thus ignoring the twist map $\alpha$; and in \cite{sheng} one defines the 
so-called $\alpha^k$-derivations, obtained by replacing in formula (\ref{eq-der}) $\alpha$
by its $k$th power. While these definitions have their merits in appropriate 
contexts, the definition adopted here is stipulated, like the definition of 
representation below, by the structural interpretation: for a (multiplicative, 
nondegenerate) Hom-Lie algebra $L$, a linear map $D: L \to L$ is an $\alpha$-derivation, 
if and only if the direct sum $L \oplus KD$, where the multiplication between 
elements of $L$ and $D$ is determined by the action of the former on the latter,
and $\alpha(D) = D$, is a (multiplicative, nondegenerate) Hom-Lie algebra.

Similarly, there are few slightly different definitions in the literature of 
what is a representation of a Hom-Lie algebra (see, for example, 
\cite{benayadi-m}, \cite{sheng}, \cite{sheng-x}). To see what is the ``right'' 
definition, let us employ an old principle which goes back to Eilenberg: for an
algebra $L$ in a given variety of algebras, two $L$-actions on a vector space 
$V$, from the left and from the right, are declared a birepresentation of $L$, 
if the semidirect sum $L \oplus V$, where multiplication between elements of $L$
and $V$ is determined by the actions of $L$ on $V$, and multiplication on $V$ is zero, belongs to the same 
variety.

So, let $(L,\liebrack,\alpha)$ be a Hom-Lie algebra, $\rho: L \to \End(V)$ a 
linear map providing an action of $L$ on $V$ (as in the case of ordinary Lie 
algebras, because $L$ is anticommutative, the left and right actions differ only
by sign, so we can speak on -- say, left -- representations instead of 
birepresentations), and an anticommutative multiplication on the semidirect sum
$L \oplus V$ is defined by $[x,v] = \rho(x)(v)$ for $x\in L$ and $v \in V$, and
by $[V,V] = 0$. To make from it a Hom-Lie algebra, we have to extend the twist 
map $\alpha$ from $L$ to $L \oplus V$. According to the Eilenberg principle, it is 
reasonable to assume that the image of the restriction of $\alpha$ to $V$ lies 
in $V$; let us denote this restriction by $\beta \in \End(V)$. Since the 
multiplication on $V$ is zero, the Hom-Jacobi identity gives nothing for triples
where at least two elements belong to $V$; for the triple $x,y \in L$, $v \in V$
it is equivalent to
\begin{equation*}
\rho([x,y])(\beta(v)) = 
\rho(\alpha(x))(\rho(y)(v)) - \rho(\alpha(y))(\rho(x)(v)) .
\end{equation*}

If $(L,\liebrack,\alpha)$ is multiplicative, then $\alpha$, extended to the 
semidirect sum $L \oplus V$, is a homomorphism of the algebra structure if and 
only if
$$
\rho(\alpha(x))(\beta(v)) = \beta(\rho(x)(v))
$$
for any $x\in L$ and $v\in V$.

Thus we arrive at

\begin{definition}\label{def-repr}
Let $(L,\liebrack,\alpha)$ be a Hom-Lie algebra, and $(V,\beta)$ a Hom-vector 
space. A linear map $\rho: L \to \End(V)$ is called a \emph{representation of 
$L$ in $V$}, if
\begin{equation}\label{eq-repr}
\rho([x,y]) \circ \beta = 
\rho(\alpha(x)) \circ \rho(y) - \rho(\alpha(y)) \circ \rho(x)
\end{equation}
holds for any $x,y\in L$.

The representation $\rho$ is called \emph{multiplicative} if, in addition to 
(\ref{eq-repr}), 
\begin{equation}\label{eq-m}
\rho(\alpha(x)) \circ \beta = \beta \circ \rho(x)
\end{equation}
holds for any $x\in L$.

The representation is called \emph{nondegenerate} if the Hom-vector space 
$(V,\beta)$ is nondegenerate.

In all these situations, $V$ is called a (multiplicative, nondegenerate) 
\emph{module} over $L$.
\end{definition}

Note that though the main utility of multiplicative representation is to serve
as the ``right'' notion of representation in the category of multiplicative 
Hom-Lie algebras, nothing prevents us to consider multiplicative representations
of not necessarily multiplicative Hom-Lie algebras (see, for example, 
Proposition~\ref{f-tens} for the situation in which this may make sense).

The notions of the kernel of a representation, the direct sum of 
representations, and subrepresentation and submodule are defined in the obvious 
standard ways. Note that, in general, the kernel of a representation is not a 
subalgebra, but the kernel of a multiplicative and nondegenerate representation
is a subalgebra.

The center is defined exactly as in the case of Lie algebras, ignoring the 
twist map:

\begin{definition}
The \emph{center} of a Hom-Lie algebra $L$, denoted as $Z(L)$, is 
$\set{z \in L}{[z,L] = 0}$.
\end{definition}

This definition is justified by

\begin{lemma}\label{lemma-ad}
Let $L$ be a (multiplicative) Hom-Lie algebra. The map $\ad: L \to \End(L)$, 
where $\ad(x)(y) = [x,y]$, is a (multiplicative) representation of $L$, called
the \emph{adjoint representation}. The kernel of the adjoint representation 
coincides with $Z(L)$.
\end{lemma}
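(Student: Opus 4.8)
The plan is to verify the representation identity (\ref{eq-repr}) directly for $\rho = \ad$, reducing it to the Hom-Jacobi identity, and then to compute the kernel. Since the twist map for the adjoint representation acts on the module $L$ by $\alpha$ itself (that is, $\beta = \alpha$), I first substitute $\rho(x) = \ad(x) = [x,\cdot\,]$ and $\beta = \alpha$ into (\ref{eq-repr}). Applied to an arbitrary $z \in L$, the left-hand side reads $[[x,y],\alpha(z)]$, while the right-hand side reads $[\alpha(x),[y,z]] - [\alpha(y),[x,z]]$. The task is therefore to show
\begin{equation*}
[[x,y],\alpha(z)] = [\alpha(x),[y,z]] - [\alpha(y),[x,z]] .
\end{equation*}
Using anticommutativity to rewrite $[\alpha(x),[y,z]] = -[[y,z],\alpha(x)]$ and $-[\alpha(y),[x,z]] = [[x,z],\alpha(y)] = -[[z,x],\alpha(y)]$, the desired equality becomes exactly the Hom-Jacobi identity $[[x,y],\alpha(z)] + [[z,x],\alpha(y)] + [[y,z],\alpha(x)] = 0$, which holds by hypothesis. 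Thus $\ad$ is a representation.

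For the multiplicative case, I would verify (\ref{eq-m}), namely $\ad(\alpha(x)) \circ \alpha = \alpha \circ \ad(x)$, by evaluating both sides on an arbitrary $y \in L$. The left-hand side gives $[\alpha(x),\alpha(y)]$ and the right-hand side gives $\alpha([x,y])$; these agree precisely because $\alpha$ is a homomorphism of the algebra structure, i.e.\ by the multiplicativity assumption $\alpha([x,y]) = [\alpha(x),\alpha(y)]$. Hence when $L$ is multiplicative, $\ad$ is a multiplicative representation.

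Finally, the statement about the kernel is immediate from the definitions: $\ad(x) = 0$ means $[x,y] = 0$ for all $y \in L$, which is exactly the condition $x \in Z(L)$. So $\Ker(\ad) = Z(L)$ as vector spaces, and no further structural argument is needed here since the claim only identifies the kernel with the center as a set.

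I do not expect any genuine obstacle: every step is a direct substitution, and the only two facts used are the Hom-Jacobi identity and (in the multiplicative case) the homomorphism property of $\alpha$. The one point that requires a moment's care is keeping the signs and the cyclic order of the Hom-Jacobi terms aligned with the two terms on the right-hand side of (\ref{eq-repr}); the matching is forced once one applies anticommutativity consistently, so this is bookkeeping rather than a substantive difficulty.
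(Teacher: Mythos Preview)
Your proof is correct and follows exactly the same approach as the paper: the paper's proof simply states that condition (\ref{eq-repr}) for $\rho=\ad$ is equivalent to the Hom-Jacobi identity, that (\ref{eq-m}) is equivalent to $\alpha$ being an algebra homomorphism, and that the kernel statement is obvious. You have spelled out these equivalences in full detail, but the argument is identical.
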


\begin{proof}
For $\rho = \ad$, the condition (\ref{eq-repr}) is equivalent to the Hom-Jacobi
identity, and the condition (\ref{eq-m}) is equivalent to the condition that the
twist map is the algebra homomorphism. The statement about the kernel is 
obvious.
\end{proof}

\begin{definition}
A representation $\rho$ of a Hom-Lie algebra $L$ is called \emph{nilpotent}, if
the associative algebra of linear transformations generated by $\rho(L)$ in 
$\End(L)$ is nilpotent. That is to say, there exists $n \in \mathbb N$ such that
$\rho(x_1) \circ \dots \circ \rho(x_n) = 0$ for any $x_1, \dots, x_n \in L$. The
minimal such $n$ is called the \emph{index of nilpotency}, or \emph{nilindex}, 
of $\rho$.
\end{definition}

\begin{definition}\label{def-tens}
Let $L$ be a Hom-Lie algebra, and $\rho: L \to \End(V)$, $\tau: L \to \End(W)$ 
be two representations of $L$ in the Hom-vector spaces $(V,\beta)$ and 
$(W,\gamma)$ respectively. Then
the map 
$\rho \otimes \tau: L \to \End(V \otimes W)$ defined as
$$
(\rho \otimes \tau)(x) = \rho(x) \otimes \gamma + \beta \otimes \tau(x)
$$
for $x \in L$, is a called the \emph{tensor product} of representations $\rho$
and $\tau$.
\end{definition}

\begin{proposition}\label{f-tens}
The tensor product of two multiplicative representations of a Hom-Lie algebra 
$L$ in Hom-vector spaces $(V,\beta)$ and $(W,\gamma)$, is a multiplicative 
representation of $L$ in the Hom-vector space $(V \otimes W, \beta \otimes \gamma)$.
\end{proposition}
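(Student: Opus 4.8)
The plan is to verify directly the two defining conditions of a multiplicative representation, namely (\ref{eq-repr}) and (\ref{eq-m}), for the map $\rho \otimes \tau$ acting on the Hom-vector space $(V \otimes W, \beta \otimes \gamma)$. Throughout I would use the elementary rule $(A \otimes B) \circ (C \otimes D) = (A \circ C) \otimes (B \circ D)$ for endomorphisms of a tensor product, together with bilinearity of $\otimes$; the whole argument is then a matter of expanding each side and collecting terms according to which of the ``corners'' $\rho\rho$, $\rho\tau$, $\tau\rho$, $\tau\tau$ they land in.

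First I would check the representation condition (\ref{eq-repr}). Expanding the left-hand side $(\rho \otimes \tau)([x,y]) \circ (\beta \otimes \gamma)$ gives $\rho([x,y])\beta \otimes \gamma^2 + \beta^2 \otimes \tau([x,y])\gamma$, and applying (\ref{eq-repr}) for $\rho$ and for $\tau$ separately rewrites the two surviving factors $\rho([x,y])\beta$ and $\tau([x,y])\gamma$ as the corresponding commutator-like expressions. Expanding the right-hand side $(\rho \otimes \tau)(\alpha(x)) \circ (\rho \otimes \tau)(y) - (\rho \otimes \tau)(\alpha(y)) \circ (\rho \otimes \tau)(x)$ produces, besides exactly these same ``diagonal'' terms carrying $\gamma^2$ and $\beta^2$, four extra ``cross'' terms of the shape $\rho(\alpha(x))\beta \otimes \gamma\tau(y)$ and $\beta\rho(y) \otimes \tau(\alpha(x))\gamma$, together with the analogous pair obtained by swapping $x$ and $y$ and changing the sign. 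The diagonal terms match automatically; the crux is that these four cross terms must cancel in pairs.

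It is precisely here that multiplicativity enters, and this is the only genuine content of the proof. Applying (\ref{eq-m}) to each representation turns $\rho(\alpha(x))\beta$ into $\beta\rho(x)$ and $\tau(\alpha(x))\gamma$ into $\gamma\tau(x)$, after which the four cross terms become $\beta\rho(x) \otimes \gamma\tau(y)$, $\beta\rho(y) \otimes \gamma\tau(x)$, $-\beta\rho(y) \otimes \gamma\tau(x)$, and $-\beta\rho(x) \otimes \gamma\tau(y)$, which cancel in two pairs. This is the sole place where multiplicativity of the representations is used, which also explains why the tensor product is set up for multiplicative representations and why condition (\ref{eq-repr}) alone would not close the computation.

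Finally I would verify the multiplicativity condition (\ref{eq-m}) for $\rho \otimes \tau$: expanding $(\rho \otimes \tau)(\alpha(x)) \circ (\beta \otimes \gamma)$ and $(\beta \otimes \gamma) \circ (\rho \otimes \tau)(x)$ and comparing, both sides reduce to $\beta\rho(x) \otimes \gamma^2 + \beta^2 \otimes \gamma\tau(x)$ after one more application of (\ref{eq-m}) to $\rho$ and to $\tau$. Since neither part of the computation invokes the Hom-Jacobi identity nor the multiplicativity of $L$ itself, but only the two conditions imposed on $\rho$ and $\tau$, the argument goes through for multiplicative representations of an arbitrary, not necessarily multiplicative, Hom-Lie algebra, in accordance with the remark following Definition~\ref{def-repr}.
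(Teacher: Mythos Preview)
Your proof is correct and follows essentially the same direct-verification approach as the paper's own proof: expand both sides of (\ref{eq-repr}) for $\rho\otimes\tau$, observe that the diagonal terms match by (\ref{eq-repr}) for $\rho$ and $\tau$, and use (\ref{eq-m}) to make the four cross terms cancel pairwise; then check (\ref{eq-m}) for $\rho\otimes\tau$ by one more application of (\ref{eq-m}) for each factor. The only cosmetic difference is that the paper inserts the cancelling cross terms on the left-hand side and then regroups, whereas you identify them on the right-hand side and cancel them there---the computation is the same.
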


\begin{proof}
To check (\ref{eq-repr}): for any $x,y \in L$ we have
\begin{align*}
(\rho \otimes \tau)([x,y]) &\circ (\beta \otimes \gamma)
\\ &\overset{\bf (1)}= 
\big(\rho([x,y]) \circ \beta\big) \otimes \gamma^2 + 
\beta^2 \otimes \big(\tau([x,y]) \circ \gamma\big) 
\\ &\overset{\bf (2)}=
\big(\rho(\alpha(x)) \circ \rho(y) - \rho(\alpha(y)) \circ \rho(x)\big) \otimes
\gamma^2
+
\beta^2 \otimes 
\big(\tau(\alpha(x)) \circ \tau(y) - \tau(\alpha(y)) \circ \rho(x)\big)
\\ &+
\Big(
 \big(\beta \circ \rho(y)\big) \otimes \big(\tau(\alpha(x)) \circ \gamma\big)
- \big(\rho(\alpha(y)) \circ \beta\big) \otimes \big(\gamma \circ \tau(x)\big)
\Big)
\\ &+
\Big(
 \big(\rho(\alpha(x)) \circ \beta \big) \otimes \big(\gamma \circ \tau(y) \big)
- \big(\beta \circ \rho(x) \big) \otimes \big(\tau(\alpha(y)) \circ \gamma \big)
\Big)
\\ &\overset{\bf (3)}=
\Big(
 \big(\rho(\alpha(x)) \circ \rho(y) \otimes \gamma^2
+ \big(\beta \circ \rho(y)\big) \otimes \big(\tau(\alpha(x)) \circ \gamma\big)
\\ &+ 
\big(\rho(\alpha(x)) \circ \beta \big) \otimes \big(\gamma \circ \tau(y) \big)
+ \beta^2 \otimes \tau(\alpha(x)) \circ \tau(y) 
\Big)
\\ &-
\Big(
 \big(\rho(\alpha(y)) \circ \rho(x)\big) \otimes \gamma^2
+ \big(\beta \circ \rho(x) \big) \otimes \big(\tau(\alpha(y)) \circ \gamma \big)
\\ &+ 
\big(\rho(\alpha(y)) \circ \beta\big) \otimes \big(\gamma \circ \tau(x)\big)
+ \beta^2 \otimes \tau(\alpha(y)) \circ \rho(x)
\Big)
\\ &\overset{\bf (4)}=
(\rho \otimes \tau)(\alpha(x)) \circ \Big(\rho(y) \otimes \gamma + \beta \otimes \tau(y)\Big)
-
(\rho \otimes \tau)(\alpha(y)) \circ \Big(\rho(x) \otimes \gamma + \beta \otimes \tau(x)\Big)
\\ &\overset{\bf (5)}=
(\rho \otimes \tau)(\alpha(x)) \circ (\rho \otimes \tau)(y)
-
(\rho \otimes \tau)(\alpha(y)) \circ (\rho \otimes \tau)(x) .
\end{align*}
Here in the equality \textbf{(1)} we use the definition of $\rho \otimes \tau$,
in \textbf{(2)} we use the fact that $\rho$ and $\tau$ are representations 
(formula (\ref{eq-repr})), and add terms which cancel out due to (\ref{eq-m}), 
in \textbf{(3)} we rearrange terms, and in \textbf{(4)} and \textbf{(5)} we use 
again repeatedly the definition of $\rho \otimes \tau$.

To check (\ref{eq-m}): for any $x \in L$ we have
\begin{multline*}
(\rho \otimes \tau)(\alpha(x)) \circ (\beta \otimes \gamma) 
\overset{\bf (1)}= 
 \big(\rho(\alpha(x)) \circ \beta\big) \otimes \gamma^2
+ \beta^2 \otimes \big(\tau(\alpha(x)) \circ \gamma\big) 
\\ \overset{\bf (2)}=
 \big(\beta \circ \rho(x)\big) \otimes \gamma^2
+ \beta^2 \otimes \big(\gamma \circ \tau(x)\big) 
=
(\beta \otimes \gamma) \circ 
\big(\rho(x) \otimes \gamma + \beta \otimes \tau(x)\big)
\overset{\bf (3)}=
(\beta \otimes \gamma) \circ (\rho \otimes \tau)(x).
\end{multline*}
Here in the equalities \textbf{(1)} and \textbf{(3)} we use the definition of 
$\rho \otimes \tau$, and in \textbf{(2)} we use twice the formula (\ref{eq-m}).
\end{proof}

Note that it is no longer true that the tensor product of not necessarily 
multiplicative representations, even of a multiplicative Hom-Lie algebra, is
a representation.

\begin{lemma}\label{lemma-tp}
The tensor product of two nilpotent multiplicative representations of a Hom-Lie 
algebra is nilpotent.
\end{lemma}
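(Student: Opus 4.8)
The plan is to bound the nilindex of $\rho \otimes \tau$ explicitly in terms of the nilindices of the two factors. Let $m$ and $k$ be the nilindices of the nilpotent representations $\rho$ and $\tau$, so that any composition of $m$ operators from $\rho(L)$, respectively of $k$ operators from $\tau(L)$, vanishes. I claim that $\rho \otimes \tau$ is nilpotent with nilindex at most $m + k - 1$. To establish this, fix $N = m + k - 1$ and arbitrary elements $x_1, \dots, x_N \in L$, and examine the composition $(\rho \otimes \tau)(x_1) \circ \dots \circ (\rho \otimes \tau)(x_N)$.

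First I would expand this composition using Definition \ref{def-tens} together with the rule $(A_1 \otimes B_1) \circ (A_2 \otimes B_2) = (A_1 \circ A_2) \otimes (B_1 \circ B_2)$. Since each of the $N$ factors is the sum $\rho(x_i) \otimes \gamma + \beta \otimes \tau(x_i)$, the whole composition becomes a sum of $2^N$ terms, one for each way of selecting, at every position $i$, either the first summand $\rho(x_i) \otimes \gamma$ or the second summand $\beta \otimes \tau(x_i)$. A term in which the first alternative is chosen at $p$ positions and the second at $q = N - p$ positions is a single tensor product $P \otimes Q$, where $P$ is the composition, taken in the original order, of the $p$ factors $\rho(x_i)$ and the $q$ factors $\beta$, while $Q$ is the composition of the $p$ factors $\gamma$ and the $q$ factors $\tau(x_j)$.

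The key step, and the only place where multiplicativity is used, is to rearrange $P$ and $Q$. Using (\ref{eq-m}) in the form $\beta \circ \rho(x) = \rho(\alpha(x)) \circ \beta$ repeatedly, I would push every occurrence of $\beta$ in $P$ to the right, each such move replacing the argument of some $\rho$ by its image under $\alpha$; this rewrites $P$ as a composition of $p$ operators of the form $\rho(\alpha^{s}(x_i))$, each again lying in $\rho(L)$, followed by $\beta^{q}$. The analogous identity $\gamma \circ \tau(x) = \tau(\alpha(x)) \circ \gamma$ rewrites $Q$ as a composition of $q$ operators of the form $\tau(\alpha^{t}(x_j)) \in \tau(L)$, followed by $\gamma^{p}$.

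Finally I would combine nilpotency with a pigeonhole count. Since $p + q = N = m + k - 1$, at least one of the inequalities $p \geq m$ or $q \geq k$ must hold. In the first case the $p$-fold composition forming the $\rho$-part of $P$ vanishes by nilpotency of $\rho$, whence $P = 0$; in the second case the $q$-fold composition in $Q$ vanishes by nilpotency of $\tau$, whence $Q = 0$. Either way $P \otimes Q = 0$, so every term of the expansion is zero and the entire composition vanishes, proving nilpotency. I expect the bookkeeping in the rearrangement step --- tracking which powers of $\alpha$ accumulate on which arguments, and confirming that the rewriting genuinely leaves $P$ as a product of $p$ elements of $\rho(L)$ (and not fewer) --- to be the only delicate point; the remainder is a direct expansion followed by a counting argument.
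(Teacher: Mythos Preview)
Your argument is correct and coincides with the paper's own proof: both expand the $N$-fold composition of $(\rho\otimes\tau)(x_i)$'s into tensor terms, use the multiplicativity relation (\ref{eq-m}) to commute all $\beta$'s (respectively $\gamma$'s) past the $\rho$'s (respectively $\tau$'s) at the cost of inserting powers of $\alpha$, and then apply the pigeonhole inequality $p+q=m+k-1$ to conclude one factor vanishes. Your bound $m+k-1$ on the nilindex matches the paper's as well.
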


\begin{proof}
Let $\rho$, $\tau$ be two nilpotent multiplicative representations of a Hom-Lie
algebra $L$, with indices of nilpotency $n$ and $m$, respectively. For each 
$x_1, \dots, x_{n+m-1} \in L$, the expression 
\begin{equation}\label{eq-rt}
(\rho \otimes \tau)(x_1) \circ \dots \circ (\rho \otimes \tau)(x_{n+m-1})
\end{equation}
is equal to the sum of elements of the form $B \otimes C$, where the first 
tensor factor $B$ is of the form 
\begin{equation}\label{eq-ind}
\beta^{r_1} \circ \rho(x_{i_1}) \circ \beta^{r_2} \circ \rho(x_{i_2}) \circ 
\dots \circ \beta^{r_k} \circ \rho(x_{i_k}) \circ \beta^{r_{k+1}}
\end{equation}
for some (possibly empty) subset of indices 
$\{i_1, \dots, i_k\} \subseteq \{1, \dots, n+m-1\}$, and some (possibly zero) 
$r_1, \dots, r_{k+1} \in \mathbb N$, the second tensor factor $C$ has the same 
form with $\beta$ replaced by $\gamma$, and $\rho$ by $\tau$, and the respective
subsets of indices occurring in $B$ and $C$ are complementary in 
$\{1,\dots,n+m-1\}$ (so the sum of the number of $\rho(x_i)$'s in the first
tensor factor and in the second tensor factor is equal to $n+m-1$).

Using the relation (\ref{eq-m}), one can put in the expression (\ref{eq-ind})
all $\beta$'s ``to the right'', getting ``at the left'' expression of the form 
\begin{equation}\label{eq-ex1}
\rho(\alpha^{s_1}(x_{i_1})) \circ \rho(\alpha^{s_2}(x_{i_2})) \circ \dots \circ 
\rho(\alpha^{s_k}(x_{i_k}))
\end{equation}
for some (possibly zero) $s_1, \dots, s_k \in \mathbb N$. Similarly, in the 
second tensor factor $C$ we can get ``at the left'' expression of the form
\begin{equation}\label{eq-ex2}
\tau(\alpha^{t_1}(x_{j_1})) \circ \tau(\alpha^{t_2}(x_{j_2})) \circ \dots \circ 
\tau(\alpha^{t_\ell}(x_{j_\ell}))
\end{equation}
for some $t_1, \ldots, t_\ell \in \mathbb N$.

Since $k+\ell = n+m-1$, we have either $k \ge n$, or $\ell \ge m$, and at least
one of expressions (\ref{eq-ex1}) and (\ref{eq-ex2}) vanishes, thus the whole 
expression (\ref{eq-rt}) vanishes, which shows that $\rho \otimes \tau$ is 
nilpotent of index $\le n+m-1$.
\end{proof}

\section{Hom-associative algebras and their representations}\label{sec-ass}

In this section we recall basic definitions and facts related to Hom-associative
algebras (see \cite{MS08}, \cite{Yau}), similarly how it was done for Hom-Lie 
algebras in the previous section. Though Hom-as\-so\-ci\-a\-ti\-ve algebras are
not our primary concern in this paper, it is necessary to consider them in relation to 
Hom-Lie algebras. Our exposition culminates in Theorem \ref{th-a} which establishes an equivalence of
the two statements which can be considered as 
``Ado theorem for Hom-Lie algebras'' -- existence of a faithful representation,
and embedding into a Hom-associative algebra -- similarly with the ordinary Lie
case.

\begin{definition}\label{def-homass}
A Hom-algebra $(A,\cdot,\alpha)$ is called a \emph{Hom-associative algebra}, if
the following \emph{Hom-as\-so\-ci\-a\-ti\-ve identity} holds:
$$
(x \cdot y) \cdot \alpha(z) = \alpha(x) \cdot (y \cdot z)
$$
for any $x,y,z \in A$.

An element $u\in A$ is called a \emph{unit} if $x\cdot u=u\cdot x=\alpha (x)$ 
for any $x\in A$.
\end{definition}

\begin{definition}
If $(A,\cdot,\alpha)$ is a Hom-algebra, then the Hom-algebra defined on the same
Hom-vector space $(A,\alpha)$, and with multiplication $\liebrack$ defined by 
the commutator: 
$$
[x,y] = x \cdot y - y \cdot x
$$
for $x,y\in A$, is called an algebra \emph{associated to $A$}, and is denoted by
$(A^{(-)}, \liebrack, \alpha)$.
\end{definition}

\begin{proposition}
A Hom-algebra associated to a Hom-associative algebra $A$, is Hom-Lie. If $A$
is multiplicative (respectively, nondegenerate), then the associated Hom-algebra
$A^{(-)}$ is multiplicative (respectively, nondegenerate).
\end{proposition}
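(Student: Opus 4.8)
The plan is to check the four claims in sequence; only the Hom-Jacobi identity requires real work, and everything else is a one-line consequence of the definitions. Anticommutativity of $A^{(-)}$ is immediate from the definition of the commutator, since $[y,x] = y \cdot x - x \cdot y = -[x,y]$.

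For the Hom-Jacobi identity I would substitute $[a,b] = a \cdot b - b \cdot a$ into the left-hand side
$$
[[x,y],\alpha(z)] + [[z,x],\alpha(y)] + [[y,z],\alpha(x)]
$$
and expand each double bracket into four products. For example,
$$
[[x,y],\alpha(z)] = (x \cdot y)\cdot\alpha(z) - (y \cdot x)\cdot\alpha(z) - \alpha(z)\cdot(x \cdot y) + \alpha(z)\cdot(y \cdot x),
$$
and the other two summands are obtained from this by the cyclic substitutions $(x,y,z) \mapsto (z,x,y) \mapsto (y,z,x)$. This yields twelve triple products, six of the shape $(a \cdot b)\cdot\alpha(c)$ and six of the shape $\alpha(c)\cdot(a \cdot b)$.

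The crux is to bring all twelve products into one common associativity pattern using the Hom-associative identity. Reading that identity from left to right rewrites each product of the first shape as $(a \cdot b)\cdot\alpha(c) = \alpha(a)\cdot(b \cdot c)$, while the products of the second shape are already of the canonical form $\alpha(c)\cdot(a \cdot b)$. Once every term is written as $\alpha(\cdot)\cdot(\cdot\,\cdot\,\cdot)$, a careful accounting shows that the twelve terms split into six pairs of opposite sign and cancel identically. I expect this bookkeeping -- tracking the six distinct products of three letters together with their signs -- to be the only genuinely error-prone step; the normalization trick above is what makes the cancellation transparent, and, notably, it uses nothing beyond Hom-associativity (multiplicativity and nondegeneracy play no role here).

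It remains to transfer multiplicativity and nondegeneracy, both of which are immediate because $A^{(-)}$ carries the very same twist map $\alpha$ as $A$. If $A$ is multiplicative, then $\alpha$ is an algebra homomorphism for $\cdot$, so $\alpha([x,y]) = \alpha(x\cdot y) - \alpha(y\cdot x) = \alpha(x)\cdot\alpha(y) - \alpha(y)\cdot\alpha(x) = [\alpha(x),\alpha(y)]$, which is exactly multiplicativity of $A^{(-)}$. If $A$ is nondegenerate, then $\alpha$ has zero kernel; since nondegeneracy is a property of the twist map alone and $A^{(-)}$ has the same $\alpha$, the associated algebra is nondegenerate as well.
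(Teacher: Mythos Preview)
Your proof is correct and is precisely the ``easy computation, similar to its (classical) non-Hom counterpart'' that the paper invokes without spelling out; the paper's own proof is a one-sentence appeal to this routine verification, and your expansion of the twelve triple products followed by the Hom-associative rewrite $(a\cdot b)\cdot\alpha(c)=\alpha(a)\cdot(b\cdot c)$ is exactly how that computation goes. The transfer of multiplicativity and nondegeneracy is handled identically.
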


\begin{proof}
An easy computation, similar to its (classical) non-Hom counterpart. The 
statement about multiplicativity and nondegeneracy is evident.
\end{proof}

Performing the same Eilenberg game as in \S \ref{sec-p} in the class of 
Hom-associative algebras, we arrive at

\begin{definition}\label{def-ass}
Let $(A,\cdot,\alpha)$ be a Hom-associative algebra, and $(V,\beta)$ a 
Hom-vector space. 

A linear map $\rho_L: A \to \End(V)$ is called a 
\emph{left representation of $A$ in $V$}, if
\begin{equation}\label{eq-l}
\rho_L(x \cdot y) \circ \beta = \rho_L(\alpha(x)) \circ \rho_L(y)
\end{equation}
holds for any $x,y \in A$. 

A linear map $\rho_R: A \to \End(V)$ is called a 
\emph{right representation of $A$ in $V$}, if
\begin{equation}\label{eq-r}
\beta \circ \rho_R(x \cdot y) = \rho_R(x) \circ \rho_R(\alpha(y)) 
\end{equation}
holds for any $x,y \in A$. 

A pair of linear maps $(\rho_L, \rho_R)$, where $\rho_L, \rho_R: A \to \End(V)$,
is called a \emph{birepresentation of $A$ in $V$}, if $\rho_L$ is a left 
representation, $\rho_R$ is a right representation, and, additionally, the following 
compatibility condition
\begin{equation}\label{eq-comp}
\rho_L(\alpha(x)) \circ \rho_R(y) = \rho_L(x) \circ \rho_R(\alpha(y))
\end{equation}
holds for any $x,y \in A$

The left representation $\rho_L$ is called \emph{multiplicative} if, in 
addition to (\ref{eq-l}),
\begin{equation*}
\rho_L(\alpha(x)) \circ \beta = \beta \circ \rho_L(x)
\end{equation*}
holds for any $x,y \in A$; the right representation $\rho_R$ is called 
\emph{multiplicative} if, in addition to (\ref{eq-r}),
$$
\rho_R(\alpha(x)) \circ \beta = \beta \circ \rho_R(x)
$$
holds for any $x,y\in A$; and the birepresentation $\rho_L,\rho_R$ is called 
\emph{multiplicative}, if $\rho_L$ is a left multiplicative representation, and
$\rho_R$ is a right multiplicative representation.

The representation or birepresentation is called \emph{nondegenerate} if the 
Hom-vector space $(V,\beta)$ is nondegenerate.

In all these situations, $V$ is called a (left, right, bi, multiplicative, 
nondegenerate) \emph{module} over $A$. 
\end{definition}

This definition extends those adopted in \cite{makhlouf-silv} (where left 
representations are considered), and is a particular case of those in 
\cite{graziani-et-al} (where representations of BiHom-algebras -- in which the
associative identity is deformed by two twist maps instead of one -- are 
considered).

Note that as in the ordinary associative case, we cannot define in any 
reasonable way the tensor product of two representations of a Hom-associative 
algebra, multiplicative or not.

The following is a Hom-associative analog of Lemma \ref{lemma-ad}.

\begin{lemma}\label{f-adj}
Let $A$ be a Hom-associative algebra. Then the map sending an element of $A$
to the left (respectively, right) multiplication on that element, is a left 
(respectively, right) representation of $A$ in itself, and together they form
a birepresentation of $A$ in itself. The kernel of this left (respectively, 
right) representation coincide with the left annulator
$$
\Ann_L(A) = \set{x \in A}{xA = 0} ,
$$
and with the right annulator
$$
\Ann_R(A) = \set{x \in A}{Ax = 0} ,
$$
respectively.
\end{lemma}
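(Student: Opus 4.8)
The plan is to realise both maps as the regular birepresentation of $A$ on itself: take for the module the underlying Hom-vector space $(A,\alpha)$ (so that the module twist $\beta$ is $\alpha$), and put $\rho_L(a)(v) = a\cdot v$ and $\rho_R(a)(v) = v\cdot a$ for $a,v\in A$. With this choice each of the three defining relations of Definition~\ref{def-ass} collapses, after evaluation on an arbitrary $v\in A$, to a single instance of the Hom-associative identity $(x\cdot y)\cdot\alpha(z)=\alpha(x)\cdot(y\cdot z)$, so the whole statement is a matter of matching the two sides.

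First I would verify that $\rho_L$ is a left representation: evaluating the two sides of (\ref{eq-l}) at $v$ gives $(x\cdot y)\cdot\alpha(v)$ and $\alpha(x)\cdot(y\cdot v)$, which agree by Hom-associativity with $z=v$. The right-representation relation (\ref{eq-r}) is treated the same way, now reading the Hom-associative identity with the module element in the \emph{first} slot: evaluated at $v$, the two sides become $\alpha(v)\cdot(x\cdot y)$ and $(v\cdot x)\cdot\alpha(y)$. For the compatibility condition (\ref{eq-comp}), evaluation at $v$ yields exactly the two sides $(x\cdot v)\cdot\alpha(y)$ and $\alpha(x)\cdot(v\cdot y)$ of the Hom-associative identity applied to the triple $(x,v,y)$. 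Hence $\rho_L$ and $\rho_R$ together form a birepresentation of $A$ in itself.

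It then remains to compute the kernels, which is immediate. By definition $\rho_L(a)=0$ means $a\cdot v=0$ for every $v\in A$, i.e. $aA=0$, which is precisely membership in $\Ann_L(A)$; symmetrically $\rho_R(a)=0$ means $v\cdot a=0$ for all $v$, i.e. $Aa=0$, giving $\Ann_R(A)$.

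I do not anticipate any genuine obstacle here: every clause is a one-line consequence of the single Hom-associative identity together with the definitions of the two annihilators. The only point demanding attention is clerical --- because right multiplications compose in the reversed order, the twist map $\alpha$ and the two factors must be placed in the mirror-image positions when matching (\ref{eq-r}) (and the right half of (\ref{eq-comp})) against the Hom-associative identity, rather than in the positions that work for the left-multiplication case.
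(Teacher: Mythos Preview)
Your proposal is correct and follows exactly the paper's approach: the paper's own proof is the single sentence that each of the three conditions \eqref{eq-l}, \eqref{eq-r}, \eqref{eq-comp}, specialised to $\rho_L,\rho_R$ being left and right multiplication with $\beta=\alpha$, is an instance of the Hom-associative identity, and that the kernel statement is obvious. Your write-up is just a more explicit unpacking of that same argument (including the clerical point about reversed composition in the right-multiplication case).
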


\begin{proof}
All the three conditions defining a birepresentation -- of left representation
(\ref{eq-l}), of right representation (\ref{eq-r}), and the compatibility 
condition (\ref{eq-comp}) -- are equivalent to the Hom-associative identity.
The statement about the kernels is obvious.
\end{proof}

\begin{lemma}
Let $A$ be an associative algebra with unit, and $a \in A$ an invertible 
element. Then $(A, \cdot, \Ad_a)$, where the new multiplication $\cdot$ in $A$ 
is defined by 
$$
x \cdot y = axa^{-1}ya^{-1}
$$
for $x,y \in A$, and $\Ad_a: A \to A$ is the adjoint (in the group sense) map: 
$$
\Ad_a(x) = axa^{-1}
$$
for $x \in A$, is a multiplicative nondegenerate Hom-associative algebra.
\end{lemma}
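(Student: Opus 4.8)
The plan is to verify, in turn, the three assertions bundled into the statement: that $\Ad_a$ is nondegenerate, that it is multiplicative for the new product, and that the new product satisfies the Hom-associative identity. I would dispose of nondegeneracy first, as it is immediate: since $a$ is invertible, $\Ad_a$ is a bijection with inverse $\Ad_{a^{-1}}$ (because $a(a^{-1}xa)a^{-1}=x$), so $\Ker \Ad_a = 0$ and the Hom-vector space $(A,\Ad_a)$ is nondegenerate.

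For the two remaining identities I would avoid a brute-force expansion and instead expose the underlying structure. Introduce the auxiliary (ordinary) product $x \cdot_0 y := x a^{-1} y$. A one-line check shows that $(A, \cdot_0)$ is associative, since both $(x \cdot_0 y)\cdot_0 z$ and $x \cdot_0 (y \cdot_0 z)$ equal $x a^{-1} y a^{-1} z$; this is the $a^{-1}$-homotope of $A$. The key observation is then that the product of the lemma factors as
$$
x \cdot y = a x a^{-1} y a^{-1} = \Ad_a(x a^{-1} y) = \Ad_a(x \cdot_0 y),
$$
and that $\Ad_a$ is an algebra automorphism of $(A, \cdot_0)$: indeed $\Ad_a(x)\cdot_0 \Ad_a(y) = (axa^{-1}) a^{-1} (a y a^{-1}) = a x a^{-1} y a^{-1} = \Ad_a(x \cdot_0 y)$.

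With this in hand, both remaining claims reduce to the associative analog of the Yau twist recalled in \S\ref{sec-p}: twisting an associative product $\cdot_0$ by an algebra endomorphism $\alpha$ through $x \cdot y := \alpha(x \cdot_0 y)$ always produces a multiplicative Hom-associative algebra $(A, \cdot, \alpha)$. Concretely, both sides of the Hom-associative identity collapse to $\alpha^2(x \cdot_0 y \cdot_0 z)$, and both sides of the multiplicativity condition collapse to $\alpha^2(x \cdot_0 y)$, using only that $\alpha$ is a $\cdot_0$-homomorphism. Taking $\alpha = \Ad_a$ completes the proof.

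I do not expect a genuine obstacle here: the statement is a routine verification. The only real pitfall is bookkeeping of the powers of $a$ --- keeping track of which adjacent $a^{-1}a$ or $aa^{-1}$ factors cancel --- and the homotope factorization above is designed precisely to eliminate that bookkeeping by reducing everything to the (already transparent) associative Yau twist. If one prefers to stay self-contained and not invoke the associative analog of the Yau twist, one simply expands directly: the Hom-associative identity reduces both $(x\cdot y)\cdot \Ad_a(z)$ and $\Ad_a(x)\cdot(y\cdot z)$ to the common normal form $a^2 x a^{-1} y a^{-1} z a^{-2}$, and multiplicativity reduces both $\Ad_a(x \cdot y)$ and $\Ad_a(x)\cdot \Ad_a(y)$ to $a^2 x a^{-1} y a^{-2}$.
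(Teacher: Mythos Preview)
Your proof is correct. The paper's own proof is merely a pointer: ``Straightforward computation, see \cite[Proposition 4.1]{sheng-x}, or \cite[Lemma 4.3]{graziani-et-al} for a more general BiHom-associative case. The nondegeneracy is evident.'' So substantively there is nothing to compare against beyond the implicit direct expansion you also supply at the end.

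That said, your main route --- recognizing $x\cdot y$ as the Yau twist of the $a^{-1}$-homotope product $x\cdot_0 y = x a^{-1} y$ by the $\cdot_0$-automorphism $\Ad_a$ --- is a genuine conceptual improvement over a bare verification. It explains \emph{why} the construction works rather than merely checking that it does, and it immediately generalizes (e.g., to any endomorphism of any associative homotope). The only cost is the need to state the associative analog of the Yau twist, but since the paper already recalls the Lie version in \S\ref{sec-p}, this is natural. Your fallback direct expansion to the normal forms $a^2 x a^{-1} y a^{-1} z a^{-2}$ and $a^2 x a^{-1} y a^{-2}$ is also correct and matches what the cited references do.
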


\begin{proof}
Straightforward computation, see \cite[Proposition 4.1]{sheng-x}, or 
\cite[Lemma 4.3]{graziani-et-al} for a more general BiHom-associative case. The
nondegeneracy is evident.
\end{proof}

\begin{lemma}\label{f-ass-end}
Let $A$ be a Hom-associative algebra, and $(V,\beta)$ a Hom-vector space with 
$\beta$ invertible. A linear map $\rho: A \to \End(V)$ is a left (respectively,
right) multiplicative representation of $A$, if and only if $\rho$ is a 
homomorphism from the Hom-associative algebra $A$ to the Hom-associative algebra
$(\End(V), \circ, \Ad_\beta)$ (respectively, to the Hom-associative algebra 
$(\End(V), \circ, \Ad_{\beta^{-1}})$).
\end{lemma}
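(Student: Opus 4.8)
The plan is to prove the equivalence by direct unwinding of the definitions on both sides and matching them term by term, exploiting the invertibility of $\beta$ to rewrite the twisted product $\Ad_\beta$ in a form that lines up with the representation axioms. First I would fix $\rho: A \to \End(V)$ and write out what it means for $\rho$ to be a homomorphism of Hom-associative algebras from $(A,\cdot,\alpha)$ to $(\End(V),\circ,\Ad_\beta)$. By the definition of homomorphism, this amounts to two conditions: compatibility with the twist maps, $\rho \circ \alpha = \Ad_\beta \circ \rho$, i.e. $\rho(\alpha(x)) = \beta \circ \rho(x) \circ \beta^{-1}$; and compatibility with multiplication, $\rho(x \cdot y) = \rho(x) \bullet \rho(y)$, where $\bullet$ denotes the twisted product on $\End(V)$, namely $f \bullet g = \beta \circ f \circ \beta^{-1} \circ g \circ \beta^{-1}$.

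Next I would show these two conditions together are equivalent to the two axioms defining a left multiplicative representation, equations (\ref{eq-l}) and (\ref{eq-m}). The twist-compatibility condition $\rho(\alpha(x)) = \beta \circ \rho(x) \circ \beta^{-1}$, upon composing with $\beta$ on the right, is exactly the multiplicativity condition (\ref{eq-m}), $\rho(\alpha(x)) \circ \beta = \beta \circ \rho(x)$ (here invertibility of $\beta$ is what lets me pass between the two forms freely). Granting (\ref{eq-m}), I would then substitute the expression $\beta \circ \rho(y) \circ \beta^{-1} = \rho(\alpha(y))$ into the product condition: the multiplicativity product $\rho(x) \bullet \rho(y) = \beta \circ \rho(x) \circ \beta^{-1} \circ \rho(y) \circ \beta^{-1} = \rho(\alpha(x)) \circ \rho(y) \circ \beta^{-1}$, so the homomorphism equation $\rho(x\cdot y) = \rho(\alpha(x)) \circ \rho(y) \circ \beta^{-1}$ becomes, after composing with $\beta$ on the right, precisely the left-representation axiom (\ref{eq-l}). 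Conversely, starting from (\ref{eq-l}) and (\ref{eq-m}), I retrace the same algebra to recover both homomorphism conditions, so the equivalence is genuine in both directions.

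For the right-representation case the argument is entirely parallel but with $\Ad_{\beta^{-1}}$ in place of $\Ad_\beta$, which swaps the roles of $\beta$ and $\beta^{-1}$ in the twisted product; the computation matching this to (\ref{eq-r}) together with the right-multiplicativity condition runs the same way, so I would simply indicate that it is symmetric rather than redo it. The one genuine subtlety — the part I expect to be the main obstacle — is bookkeeping the placement of $\beta^{\pm 1}$ factors correctly: the twisted product $\Ad_\beta$ inserts a $\beta^{-1}$ between the two composed maps and conjugates each factor, and one must be careful that it is $\rho(\alpha(x)) \circ \rho(y)$ rather than $\rho(x) \circ \rho(\alpha(y))$ that emerges, matching the left axiom (\ref{eq-l}) and not the right axiom. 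This is where invertibility of $\beta$ is essential and is the reason the hypothesis is stated; everything else is a routine, reversible substitution.
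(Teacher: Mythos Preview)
Your proposal is correct and takes exactly the approach the paper intends: the paper's own proof consists only of the phrase ``straightforward computation'' with references to \cite{sheng-x} and \cite{graziani-et-al}, and what you have written is precisely that computation carried out in detail for the left case. Your identification of the twisted product on $\End(V)$, the matching of the twist-compatibility condition with (\ref{eq-m}), and the reduction of the product condition to (\ref{eq-l}) are all correct, and the appeal to symmetry for the right case is in the same spirit as the paper's treatment.
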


\begin{proof}
Straightforward computation, see \cite[Theorem 4.2]{sheng-x} for a similar
computation in the Hom-Lie case, or \cite[Proposition 4.4]{graziani-et-al} for
a more general BiHom-associative case.
\end{proof}

The following could be dubbed as ``Ado theorem for Hom-associative algebras''. 
Of course, its associative analog is trivial (adjoin a unit, and consider the 
representation in itself), and the proof below just mimics this triviality in 
the Hom case.

\begin{lemma}\label{f-5}
Any (finite-dimensional) multiplicative nondegenerate Hom-associative algebra 
admits a faithful (finite-di\-men\-si\-o\-nal) left (or right) nondegenerate 
representation.
\end{lemma}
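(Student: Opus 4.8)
The plan is to mimic the trivial associative argument, as the remark preceding the statement suggests: adjoin a unit and let the algebra act on itself by left (respectively right) multiplication, which by Lemma~\ref{f-adj} is a left (respectively right) representation whose kernel is the left (respectively right) annulator. The only genuine issue is that a Hom-associative algebra need not possess a unit in the sense of Definition~\ref{def-homass}, where $x \cdot u = u \cdot x = \alpha(x)$, so first I would construct a unital extension $A^\sharp = A \oplus Ku$ with $u$ acting as such a Hom-unit and $\alpha(u) = u$. One must check that $A^\sharp$, so defined, is again Hom-associative, multiplicative, and nondegenerate; verifying the Hom-associative identity on triples involving $u$ reduces, using $\alpha(u)=u$ and multiplicativity of $\alpha$ on $A$, to the Hom-associative identity and multiplicativity already present in $A$.

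Next I would take $\rho_L \colon A^\sharp \to \End(A^\sharp)$ to be the left regular representation, $\rho_L(x) = L_x$ where $L_x(y) = x \cdot y$, on the Hom-vector space $(A^\sharp, \alpha^\sharp)$. By Lemma~\ref{f-adj} this is a left representation, and since $A^\sharp$ is multiplicative and nondegenerate, the representation inherits these properties; the computation $\rho_L(\alpha^\sharp(x)) \circ \alpha^\sharp = \alpha^\sharp \circ \rho_L(x)$ is exactly multiplicativity of $\alpha^\sharp$ as an algebra homomorphism. Faithfulness then follows because $\Ann_L(A^\sharp) = 0$: if $x \cdot y = 0$ for all $y \in A^\sharp$, then in particular $x \cdot u = \alpha^\sharp(x) = 0$, and nondegeneracy of $\alpha^\sharp$ forces $x = 0$. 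The same argument with right multiplications and $\Ann_R(A^\sharp) = 0$ yields the right-module version.

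The step I expect to be the main obstacle is constructing the unital extension $A^\sharp$ correctly and confirming that it stays inside the class of multiplicative nondegenerate Hom-associative algebras, since the Hom-unit condition $x \cdot u = \alpha(x)$ interacts nontrivially with the twist and is more delicate than adjoining an ordinary two-sided identity. In particular one has to settle how $u \cdot u$ is defined and check the Hom-associative identity for triples of the shapes $(x,y,u)$, $(x,u,z)$, $(u,y,z)$ and those with repeated $u$'s; setting $u \cdot u = u$ and $\alpha(u) = u$ should make all these reduce to identities holding in $A$, but this verification is where the real content lies. Once $A^\sharp$ is in hand, the passage to a faithful nondegenerate left (or right) representation is the routine regular-representation argument recorded above, and finite-dimensionality of the representation is immediate since $\dim A^\sharp = \dim A + 1$.
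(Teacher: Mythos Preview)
Your proposal is correct and follows the same approach as the paper: adjoin a Hom-unit to form $A^\sharp$ (the paper's $\widehat A$), verify it remains multiplicative nondegenerate Hom-associative, and use the regular representation, whose faithfulness follows from $x \cdot u = \alpha^\sharp(x)$ together with nondegeneracy of $\alpha^\sharp$. The only point left slightly implicit is that the faithful representation of $A^\sharp$ on itself must then be restricted to the subalgebra $A$ to yield the desired representation of $A$, but this is immediate.
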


\begin{proof}
Let $(A,\cdot,\alpha)$ be a multiplicative nondegenerate Hom-associative 
algebra. Adjoin to $A$ a unit (in the Hom sense, see 
Definition~\ref{def-homass}); namely, consider the direct sum 
$\widehat A = A \oplus K 1_\alpha$, and extend to it the multiplication and the
twist map as follows:
$$
x \cdot 1_\alpha = 1_\alpha \cdot x = \alpha(x)
$$
for any $x \in \widehat A$, and
\begin{gather*}
\alpha(1_\alpha) = 1_\alpha .
\end{gather*}

Then $(\widehat A, \cdot, \alpha)$ is a multiplicative nondegenerate 
Hom-associative algebra, containing $A$ as a subalgebra. By Lemma \ref{f-adj}, 
the left (right) representation of $\widehat A$ in itself has zero kernel, and 
its restriction to $A$ provides the desired representation.
\end{proof}

\begin{lemma}\label{f-4}
Let $L$ be a Hom-Lie algebra, and $(V, \beta)$ a Hom-vector space with $\beta$
invertible. A linear map $\rho: L \to \End(V)$ is a multiplicative 
representation of $L$, if and only if $\rho$ is a homomorphism from the Hom-Lie
algebra $L$ to the Hom-Lie algebra $(\End(V)^{(-)}, \liebrack, \Ad_\beta)$ 
(where $\liebrack$ is a commutator with respect to composition of linear maps on $V$).
\end{lemma}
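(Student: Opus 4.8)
The plan is to unwind the two defining conditions of a Hom-Lie homomorphism and match them, one against each, with the two defining conditions (\ref{eq-repr}) and (\ref{eq-m}) of a multiplicative representation. First I would pin down the target. Here $\End(V)^{(-)}$ is the Hom-Lie algebra associated, via the commutator, to the multiplicative nondegenerate Hom-associative algebra $(\End(V),\bullet,\Ad_\beta)$ appearing in Lemma~\ref{f-ass-end}, whose product is the composition of linear maps on $V$ carried along by the invertible twist $\beta$, namely $f\bullet g=\beta\circ f\circ\beta^{-1}\circ g\circ\beta^{-1}$, and whose twist map is $\Ad_\beta(f)=\beta\circ f\circ\beta^{-1}$. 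Its bracket is thus $[f,g]=f\bullet g-g\bullet f$. By the definition of a homomorphism of Hom-algebras, a linear map $\rho\colon L\to\End(V)$ is a homomorphism onto $\End(V)^{(-)}$ precisely when it intertwines the twist maps, $\rho\circ\alpha=\Ad_\beta\circ\rho$, and preserves brackets, $\rho([x,y])=[\rho(x),\rho(y)]$ for all $x,y\in L$.

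Second I would dispose of the twist-compatibility. Because $\beta$ is invertible, the condition $\rho\circ\alpha=\Ad_\beta\circ\rho$ says exactly $\rho(\alpha(x))=\beta\circ\rho(x)\circ\beta^{-1}$, and composing on the right with $\beta$ turns this into $\rho(\alpha(x))\circ\beta=\beta\circ\rho(x)$, which is verbatim the multiplicativity condition (\ref{eq-m}). This is an immediate two-sided equivalence, and it consumes nothing beyond the invertibility of $\beta$.

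Third, the substantive step: I would show that, once (\ref{eq-m}) is in force, bracket-preservation is equivalent to the representation identity (\ref{eq-repr}). Expanding the target bracket (writing compositions by juxtaposition) and then using (\ref{eq-m}) in the form $\beta\,\rho(x)\,\beta^{-1}=\rho(\alpha(x))$ to absorb the outer $\beta$ and $\beta^{-1}$ on each term, one gets
\[
[\rho(x),\rho(y)]=\beta\rho(x)\beta^{-1}\rho(y)\beta^{-1}-\beta\rho(y)\beta^{-1}\rho(x)\beta^{-1}=\big(\rho(\alpha(x))\rho(y)-\rho(\alpha(y))\rho(x)\big)\beta^{-1}.
\]
Hence $\rho([x,y])=[\rho(x),\rho(y)]$ is equivalent, after composing on the right with $\beta$, to $\rho([x,y])\circ\beta=\rho(\alpha(x))\circ\rho(y)-\rho(\alpha(y))\circ\rho(x)$, which is exactly (\ref{eq-repr}). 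Reading this chain in both directions closes the argument: a homomorphism satisfies (\ref{eq-m}), and then the displayed identity converts bracket-preservation into (\ref{eq-repr}); conversely a multiplicative representation satisfies (\ref{eq-m}), which in turn converts (\ref{eq-repr}) back into bracket-preservation.

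I expect the main obstacle to be conceptual bookkeeping rather than a hard estimate: the crux is the correct identification of the bracket on $\End(V)^{(-)}$ as the commutator of the $\Ad_\beta$-twisted composition $\bullet$ that makes $(\End(V),\bullet,\Ad_\beta)$ Hom-associative and $\End(V)^{(-)}$ genuinely Hom-Lie, and the observation that it is precisely the twist-compatibility (\ref{eq-m}) which licenses moving the inner $\beta^{\pm1}$ past $\rho$ and so reconciles bracket-preservation with (\ref{eq-repr}). The two conditions are therefore interdependent, and neither half of the equivalence can be checked in isolation from the other. Invertibility of $\beta$ is used essentially throughout, both to define $\Ad_\beta$ and to pass freely between the compatibility and representation forms, which is exactly why the statement asks for invertible, and not merely nondegenerate, $\beta$. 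The whole computation runs parallel to Lemma~\ref{f-ass-end}, of which this is the commutator (Hom-Lie) shadow.
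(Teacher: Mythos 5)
Your proof is correct, but it is worth noting that the paper does not actually prove this lemma at all: its ``proof'' is a citation to Theorem 4.2 of Sheng--Xiong \cite{sheng-x} and Proposition 4.10 of Graziani et al.\ \cite{graziani-et-al}. What you have written is essentially the computation carried out in those references, so your argument is a self-contained replacement for the paper's appeal to the literature. Two points in your write-up deserve emphasis. First, your identification of the target bracket is the crucial (and correct) move: the parenthetical in the statement, ``commutator with respect to composition,'' is misleading if read as the plain commutator $f\circ g-g\circ f$, since $(\End(V),\circ,\Ad_\beta)$ with untwisted composition is not Hom-associative, the plain commutator with twist $\Ad_\beta$ does not satisfy the Hom-Jacobi identity, and the equivalence would in fact fail. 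The intended meaning, which you correctly extract from the $A^{(-)}$ construction applied to the Hom-associative algebra $\bigl(\End(V),\bullet,\Ad_\beta\bigr)$ with $f\bullet g=\beta\circ f\circ\beta^{-1}\circ g\circ\beta^{-1}$ (the $\Ad_a$-twist lemma of \S\ref{sec-ass}), is the commutator of that twisted product, exactly as in \cite{sheng-x}. Second, your logical bookkeeping is sound: condition (\ref{eq-m}) is unconditionally equivalent to twist-intertwining $\rho\circ\alpha=\Ad_\beta\circ\rho$, and \emph{given} (\ref{eq-m}) the identity $[\rho(x),\rho(y)]=\bigl(\rho(\alpha(x))\circ\rho(y)-\rho(\alpha(y))\circ\rho(x)\bigr)\circ\beta^{-1}$ converts bracket-preservation into (\ref{eq-repr}) and back, so the two conjunctions are equivalent. (Your closing remark that ``neither half can be checked in isolation'' slightly overstates this: the twist half is independent; it is only the bracket half that presupposes (\ref{eq-m}).) This asymmetry is harmless and does not affect the proof.
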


\begin{proof}
This is Theorem 4.2 from \cite{sheng-x}, or a particular case of 
Proposition 4.10 from \cite{graziani-et-al}.
\end{proof}

\begin{theorem}\label{th-a}
For a Hom-Lie algebra $L$, the following are equivalent:
\begin{enumerate}[\upshape(i)]
\item
$L$ admits a finite-dimensional faithful multiplicative nondegenerate 
representation;
\item
$L$ is embedded into a Hom-Lie algebra of the form $A^{(-)}$, where $A$ is a
finite-dimensional multiplicative nondegenerate Hom-associative algebra.
\end{enumerate}
\end{theorem}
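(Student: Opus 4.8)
The plan is to establish the two implications separately. Both directions rest on the observation that, since $L$ acts on a \emph{finite-dimensional} space, a nondegenerate twist map (one with zero kernel) is automatically invertible, which makes the $\Ad_\beta$-constructions of the previous section available.

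For (i)$\Rightarrow$(ii), suppose $\rho \colon L \to \End(V)$ is a finite-dimensional faithful multiplicative nondegenerate representation on a Hom-vector space $(V,\beta)$. As just noted, $\beta$ is invertible, so Lemma~\ref{f-4} applies and tells us that $\rho$ is a homomorphism of Hom-Lie algebras from $L$ into $(\End(V)^{(-)}, \liebrack, \Ad_\beta)$. By construction this target is nothing but $A^{(-)}$, where $A = (\End(V), \circ, \Ad_\beta)$ is the finite-dimensional multiplicative nondegenerate Hom-associative algebra supplied by Lemma~\ref{f-ass-end} (i.e.\ the $\Ad_\beta$-twist of the associative algebra $\End(V)$). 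Since $\rho$ is faithful it is injective, hence an embedding, and (ii) follows.

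For (ii)$\Rightarrow$(i), suppose $L$ is embedded into $A^{(-)}$ for some finite-dimensional multiplicative nondegenerate Hom-associative algebra $A$. I would invoke Lemma~\ref{f-5} to get a faithful finite-dimensional nondegenerate left representation $\rho_L \colon A \to \End(V)$ on some $(V,\beta)$; note that $\rho_L$ is moreover multiplicative, being the left regular representation of the multiplicative algebra $\widehat A$. The one thing to verify is that $\rho_L$, read as a linear map on $A^{(-)}$, is a representation of the Hom-Lie algebra $A^{(-)}$: expanding $[x,y] = x\cdot y - y\cdot x$ and applying the left-representation identity~(\ref{eq-l}) to the two summands produces exactly the Hom-Lie representation identity~(\ref{eq-repr}), while the multiplicativity condition~(\ref{eq-m}) and nondegeneracy are literally the same for $A$ and for $A^{(-)}$. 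Restricting $\rho_L$ to the Hom-subalgebra $L$ then yields a finite-dimensional multiplicative nondegenerate representation of $L$, faithful because $\rho_L$ is already injective on the whole of $A$; this is (i).

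The argument involves no serious obstacle; the only point demanding attention is the bookkeeping in (i)$\Rightarrow$(ii), where one must recognize the Hom-Lie algebra of Lemma~\ref{f-4} as the associated algebra $A^{(-)}$ of the correct Hom-associative structure on $\End(V)$, and where the passage from nondegeneracy to invertibility of $\beta$ is genuinely used. The transfer computation in (ii)$\Rightarrow$(i) is routine: the twist maps cause no difficulty since all the defining identities are linear in the arguments being manipulated.
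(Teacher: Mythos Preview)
Your proof is correct and follows essentially the same route as the paper. The paper organizes the argument by introducing an auxiliary condition (iii) --- embedding of $L$ into $(\End(V)^{(-)},\liebrack,\Ad_\beta)$ --- and proves the cycle (i)$\Leftrightarrow$(iii), (iii)$\Rightarrow$(ii), (ii)$\Rightarrow$(iii); your (i)$\Rightarrow$(ii) is exactly their (i)$\Rightarrow$(iii)$\Rightarrow$(ii). For (ii)$\Rightarrow$(i) you take a slightly more direct path: rather than embedding $A$ Hom-associatively into $(\End(V),\circ,\Ad_\beta)$ via Lemma~\ref{f-ass-end} and then invoking Lemma~\ref{f-4}, you verify by hand that the left-representation identity (\ref{eq-l}) for $\rho_L$ immediately yields the Hom-Lie representation identity (\ref{eq-repr}) for $A^{(-)}$. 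This shortcut is harmless and saves one lemma citation; your observation that the regular representation of $\widehat A$ is automatically multiplicative (which the paper uses implicitly when applying Lemma~\ref{f-ass-end}) is worth making explicit.
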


\begin{proof}
Let us introduce another auxiliary condition:
\begin{enumerate}[\upshape(i)]
\setcounter{enumi}{2}
\item
$L$ is embedded into a Hom-Lie algebra of the form 
$(\End(V)^{(-)}, \liebrack, \Ad_\beta)$ for some fi\-ni\-te-di\-men\-si\-o\-nal 
nondegenerate Hom-vector space $(V,\beta)$.
\end{enumerate}

Then (iii) $\Rightarrow$ (ii) is obvious, and (i) $\Leftrightarrow$ (iii) 
follows from Lemma \ref{f-4}.

(ii) $\Rightarrow$ (iii): By Lemma \ref{f-5}, $A$ admits a faithful (say, left) 
representation in a finite-dimensional nondegenerate Hom-vector space 
$(V,\beta)$, and then by Lemma \ref{f-ass-end}, $A$ is embedded into a 
Hom-associative algebra of the form $(\End(V), \circ, \Ad_\beta)$. Consequently,
$A^{(-)}$, and hence $L$, is embedded into the Hom-Lie algebra 
$(\End(V)^{(-)}, \liebrack, \Ad_\beta)$.
\end{proof}

\section{Nilpotent algebras}\label{sec-nilp}

According to \cite{mm}, the notion of nilpotency is carried over to the Hom-Lie
case verbatim: 

\begin{definition}
A Hom-Lie algebra $(L,\liebrack,\alpha)$ is called \emph{nilpotent} if it is 
nilpotent as an algebra; that is, $L^n = 0$ for some $n \in \mathbb N$, where 
$L^n$, the members of the lower central series of $L$, are defined inductively: 
$L^1 = L$, and $L^n = [L^{n-1},L]$ for $n>1$. The minimal number $n$ such that
$L^n = 0$ is called \emph{the degree of nilpotency}, or \emph{nilindex}. A 
nilpotent Hom-Lie algebra of degree $2$ (i.e., with zero multiplication) is 
called \emph{abelian}.
\end{definition}

Note that for a multiplicative Hom-Lie algebra $L$ we have 
$\alpha(L^n) \subseteq L^n$ for any $n$.

Note also that the Yau twist of a nilpotent Lie algebra $(L,[\cdot ,\cdot])$ 
along a Lie algebra homomorphism $\alpha: L \rightarrow L$, is a nilpotent 
Hom-Lie algebra; that is, $(L,[\cdot ,\cdot ]_{\alpha},\alpha)$ is a nilpotent 
Hom-Lie algebra, where $\left[x,y\right]_{\alpha}=[\alpha(x),\alpha(y)]$. 
Conversely, if $(L,[\cdot ,\cdot ],\alpha)$ is a multiplicative nondegenerate 
nilpotent Hom-Lie algebra, then its untwist $(L,\alpha^{-1}[\cdot ,\cdot ])$ is
a nilpotent Lie algebra.

\begin{definition}\label{def-st}
A Hom-Lie algebra $L$ is called \emph{strongly nilpotent}, if there is a chain
of ideals of $L$, starting with $0$ and ending with $L$:
\begin{equation}\label{eq-i}
0 = I_n \vartriangleleft I_{n-1} \vartriangleleft \dots \vartriangleleft I_2 
\vartriangleleft I_1 = L
\end{equation}
such that for each $1 < i < n$, $\dim I_i/I_{i+1} = 1$ and 
$[L,I_i] \subseteq I_{i+1}$.
\end{definition}



\begin{lemma}\label{lemma-ij}
A finite-dimensional Hom-Lie algebra $L$ is strongly nilpotent if and only if it
satisfies the following property: every nonzero ideal $I$ of $L$ contains an 
ideal $J$ of codimension $1$ in $I$ such that $[L,I] \subseteq J$.
\end{lemma}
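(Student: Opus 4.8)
The plan is to prove the two implications separately; in both, the finite descending chain of ideals is the organizing object, and finite-dimensionality guarantees that everything terminates.

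First I would dispatch the direction asserting that the stated property implies strong nilpotency, which is a routine greedy construction from the top. Put $I_1 = L$, and suppose inductively that a nonzero ideal $I_i$ of $L$ has been produced. Applying the property to the nonzero ideal $I = I_i$ yields an ideal $I_{i+1}$ of $L$ with $I_{i+1} \subseteq I_i$, $\dim I_i / I_{i+1} = 1$, and $[L, I_i] \subseteq I_{i+1}$. Because $\dim I_{i+1} = \dim I_i - 1$, after finitely many steps we reach $I_{i+1} = 0$, and the chain so obtained is precisely the one demanded in Definition~\ref{def-st}. The only point to keep in mind is that each $I_{i+1}$ is again an ideal of $L$ — furnished directly by the property — so that the property may be reapplied at the next stage.

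The substance lies in the converse, and the key idea is not to build the codimension-one ideal inside $I$ by hand, but to intersect $I$ with the chain already at our disposal. So assume $L$ is strongly nilpotent, witnessed by $0 = I_n \vartriangleleft \dots \vartriangleleft I_1 = L$, and let $I$ be an arbitrary nonzero ideal. Let $k$ be the least index with $I \not\subseteq I_k$; since $I \subseteq I_1 = L$ while $I \neq 0 = I_n$, we have $2 \le k \le n$ and $I \subseteq I_{k-1}$. I would then set $J = I \cap I_k$. As an intersection of two ideals of $L$ — each being a subalgebra stable under the twist map — $J$ is again an ideal of $L$. It then remains to check the codimension and the containment $[L,I] \subseteq J$. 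For the codimension, the map $I \to I_{k-1}/I_k$, $x \mapsto x + I_k$, has kernel $I \cap I_k = J$ and hence embeds $I/J$ into the one-dimensional space $I_{k-1}/I_k$; since $I \not\subseteq I_k$ makes $I/J$ nonzero, we get $\dim I/J = 1$. For the containment, $I$ being an ideal gives $[L,I] \subseteq I$, while $I \subseteq I_{k-1}$ together with the chain condition $[L, I_{k-1}] \subseteq I_k$ gives $[L,I] \subseteq I_k$; intersecting the two yields $[L,I] \subseteq I \cap I_k = J$.

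The step I expect to be the main obstacle is exactly the production of $J$ in this converse direction, and the reason the intersection trick is worth isolating is instructive. A direct attempt would look for an $\alpha$-invariant hyperplane of $I$ containing $[L,I]$: the bracket condition $[L,J] \subseteq J$ is then automatic from $[L,I] \subseteq J \subseteq I$, so the entire difficulty is the $\alpha$-invariance of $J$. Without multiplicativity one cannot assume that $[L,I]$ is itself $\alpha$-invariant, so one cannot pass to the quotient $I/[L,I]$ and select an eigen-hyperplane of the induced twist (which would moreover require an algebraically closed ground field). Intersecting with the given chain circumvents this completely, since the $I_k$ are $\alpha$-invariant by the very definition of ideal; this is also where the centrality condition $[L, I_{k-1}] \subseteq I_k$ of the chain is consumed, so no hypothesis on $\alpha$ beyond what the definition of ideal already supplies is needed.
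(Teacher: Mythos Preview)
Your proof is correct and follows essentially the same approach as the paper: the ``if'' direction is the same greedy descent, and for the ``only if'' direction the paper defers to \cite[Lemma~2.8]{ado}, whose argument is precisely the intersection trick $J = I \cap I_k$ you carry out, with the same verification of codimension via $I/J \hookrightarrow I_{k-1}/I_k$ and of $[L,I] \subseteq J$ via the chain condition. Your closing remark on why this bypasses any need for multiplicativity or algebraic closedness is a nice clarification not spelled out in the paper.
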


\begin{proof}
The ``if'' part: starting with $I_1=L$, we build inductively the chain of ideals
such that $I_{i+1}$ is of codimension $1$ in $I_i$, and 
$[L,I_i] \subseteq I_{i+1}$. Since $L$ is finite-dimensional, this chain will 
terminate at $0$.

The proof of the ``only if'' part repeats verbatim the proof of 
\cite[Lemma 2.8]{ado}, with ``Lie'' being replaced by ``Hom-Lie'', and 
``nilpotent'' by ``strongly nilpotent''.
\end{proof}

\begin{lemma}\label{lemma-nilp}
A strongly nilpotent Hom-Lie algebra is nilpotent.
\end{lemma}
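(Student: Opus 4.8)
The plan is to show that the lower central series of $L$ descends at least as fast as a chain of ideals witnessing strong nilpotency, so that it is forced to reach $0$. First I would use Lemma~\ref{lemma-ij} to produce, exactly as in the ``if'' direction of that lemma, a chain of ideals
\[
0 = I_n \vartriangleleft I_{n-1} \vartriangleleft \dots \vartriangleleft I_1 = L
\]
as in~\eqref{eq-i}, in which every $I_{i+1}$ has codimension $1$ in $I_i$ and, crucially, $[L,I_i] \subseteq I_{i+1}$ holds for each $i = 1, \dots, n-1$. Note that the twist map $\alpha$ will play no role in the argument: just as in the classical Lie case, nilpotency is a statement about the bracket alone, and although each $I_i$ is by definition an $\alpha$-invariant ideal, this invariance is never invoked in the computation below.

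The heart of the proof is the claim that $L^i \subseteq I_i$ for every $i = 1, \dots, n$, which I would establish by induction on $i$. The base case is the equality $L^1 = L = I_1$. For the inductive step, assume $L^i \subseteq I_i$; using anticommutativity of the bracket one may write $L^{i+1} = [L^i, L] = [L, L^i]$, and then the inductive hypothesis together with monotonicity of the bracket gives
\[
L^{i+1} = [L, L^i] \subseteq [L, I_i] \subseteq I_{i+1} ,
\]
which is the assertion for $i+1$. Setting $i = n$ then yields $L^n \subseteq I_n = 0$, whence $L^n = 0$ and $L$ is nilpotent of degree at most $n$.

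I do not expect a genuine obstacle: the statement is the Hom-analog of an entirely standard fact about central series, and the twist map conveniently drops out. The one point deserving a little care is the very start of the induction, which is launched by the inclusion $[L,L] \subseteq I_2$ (the case $i=1$). For this reason I would extract the chain through Lemma~\ref{lemma-ij}, whose inductive construction supplies $[L,I_i] \subseteq I_{i+1}$ uniformly for all $i$ from $1$ to $n-1$, so that the top of the chain is covered and the induction gets off the ground.
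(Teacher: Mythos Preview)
Your inductive argument $L^i \subseteq I_i$ is exactly what the paper does. The only difference is that the paper takes the chain~\eqref{eq-i} directly from Definition~\ref{def-st} rather than rebuilding it through Lemma~\ref{lemma-ij}; your detour is harmless but unnecessary, and it quietly imports the finite-dimensionality hypothesis of Lemma~\ref{lemma-ij}, which Lemma~\ref{lemma-nilp} does not assume. (Your worry about launching the induction at $i=1$ reflects the literal bound $1<i<n$ in Definition~\ref{def-st}, which is almost certainly a slip for $1\le i<n$; the paper's proof simply uses $[L,I_i]\subseteq I_{i+1}$ for all $i$, including $i=1$.)
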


\begin{proof}
The proof is exactly the same as in the Lie case (see, for example, 
\cite[\S 4, Proposition 1]{bourbaki}). Let $L$ be a strongly nilpotent Hom-Lie
algebra with the chain of ideals (\ref{eq-i}). Then by induction we have 
$L^i \subseteq I_i$ for any $1 \le i \le n$; in particular, 
$L^n \subseteq I_n = 0$.
\end{proof}

It is well known that in the class of Lie algebras the converse is true, i.e., 
the notions of nilpotency and strong nilpotency coincide. This is not so in the
class of Hom-Lie algebras: a trivial example is provided by an abelian Hom-Lie 
algebra $L$ of dimension $>1$. Ideals in $L$ are exactly Hom-subspaces, so if we
choose the twist map $\alpha$ in such a way that there exists no proper 
$\alpha$-invariant subspaces, $L$ will be not strongly nilpotent. Of course, 
this requires the ground field to be not algebraically closed. However, over 
algebraically closed fields, and in the class of finite-dimensional 
multiplicative Hom-Lie algebras, the notions of nilpotency and strong 
nilpotency, like in the Lie case, do coincide. To prove this, we need the 
following elementary linear-algebraic fact.

\begin{lemma}\label{l-ab}
For any two Hom-subspaces $A \subsetneqq B$ of a finite-dimensional Hom-vector 
space over an algebraically closed field, there is a third Hom-subspace $C$ such
that $A \subseteq C \subset B$, and $C$ is of codimension $1$ in $B$.
\end{lemma}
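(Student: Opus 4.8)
The plan is to reduce the assertion to a statement of pure linear algebra about a single operator on the quotient space $B/A$, and then to exploit algebraic closedness to produce an invariant hyperplane. Throughout, ``Hom-subspace'' simply means ``$\alpha$-invariant subspace'', so the twist map is the only structure that needs to be tracked.

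First I would pass to the quotient. Since $A$ and $B$ are both $\alpha$-invariant, the twist map descends to a linear operator $\overline\alpha$ on the vector space $W := B/A$, which is nonzero (as $A \subsetneqq B$) and finite-dimensional. Via the canonical projection $\pi\colon B \to W$, the Hom-subspaces $C$ with $A \subseteq C \subseteq B$ correspond bijectively to the $\overline\alpha$-invariant subspaces of $W$, with codimensions preserved. Hence it suffices to find an $\overline\alpha$-invariant subspace of $W$ of codimension $1$, and then set $C := \pi^{-1}(H)$ for that subspace $H$.

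Second, I would produce such a hyperplane using the transpose operator $\overline\alpha^{*}\colon W^{*} \to W^{*}$. Because $W^{*}$ is nonzero and finite-dimensional over an algebraically closed field, $\overline\alpha^{*}$ has an eigenvalue $\lambda$ with a nonzero eigenvector $f \in W^{*}$, so that $f \circ \overline\alpha = \lambda f$. Setting $H := \Ker f$, we get a subspace of codimension $1$ in $W$ which is $\overline\alpha$-invariant, since for $w \in H$ one has $f(\overline\alpha(w)) = \lambda f(w) = 0$. Lifting back, $C := \pi^{-1}(H)$ is then an $\alpha$-invariant subspace with $A \subseteq C$, with $B/C \cong W/H$ one-dimensional (so $C$ has codimension $1$ in $B$), and with $C \neq B$ because $H \neq W$; thus $A \subseteq C \subset B$, as required.

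The only substantive point — and the only place where the hypothesis on the ground field is used — is the existence of the codimension-$1$ invariant subspace $H$ in the second step. This is precisely the linear-algebraic fact that an operator on a nonzero finite-dimensional vector space over an algebraically closed field admits an invariant hyperplane (equivalently, is triangularizable); it fails over non-closed fields, in agreement with the abelian counterexample discussed just before the lemma. Everything else is routine bookkeeping about quotients and preimages under $\pi$, so I do not expect any genuine obstacle beyond isolating this eigenvalue argument.
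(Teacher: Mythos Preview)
Your proof is correct. Both your argument and the paper's reduce to the quotient $B/A$ and invoke the existence of eigenvectors over an algebraically closed field, but they proceed in opposite directions: the paper builds \emph{up} from $A$, proving by induction that there is an $\alpha$-invariant subspace of every dimension between $\dim A$ and $\dim B$ (at each step one takes an eigenvector of the induced twist map in the current quotient and pulls back its span, increasing the dimension by one), whereas you go \emph{down} directly to codimension one by passing to the dual and taking an eigenvector of $\overline\alpha^{*}$. Your route is slightly slicker for the precise statement needed; the paper's route yields the marginally stronger fact that a full $\alpha$-invariant flag exists between $A$ and $B$, though only the codimension-one step is actually used later.
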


(In essence, this says that a finite-dimensional abelian Hom-Lie algebra over an
algebraically closed field is strongly nilpotent).

\begin{proof}
In fact, for any integer $n$ such that $\dim A \le n \le \dim B$, there exists a
Hom-subspace $C$ of dimension $n$ sitting between $A$ and $B$. This is proved by
induction on $n$: the induction step consists of taking the quotient $B/C$ by 
the $n$-dimensional Hom-subspace $C$ containing $A$, taking eigenvector (i.e., 
an $1$-dimensional Hom-subspace) of the twist map in this quotient, and passing
back to its preimage in $B$.
\end{proof}

\begin{proposition}\label{prop-nilp}
A finite-dimensional multiplicative Hom-Lie algebra over an algebraically closed
field is nilpotent if and only if it is strongly nilpotent.
\end{proposition}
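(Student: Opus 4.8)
The plan is to prove the nontrivial implication, since \emph{strongly nilpotent $\Rightarrow$ nilpotent} is already Lemma~\ref{lemma-nilp}. So I assume $L$ is a finite-dimensional multiplicative nilpotent Hom-Lie algebra over an algebraically closed field, and aim to show it is strongly nilpotent. Rather than constructing the chain (\ref{eq-i}) directly, I would invoke the characterization in Lemma~\ref{lemma-ij}: it suffices to show that every nonzero ideal $I$ of $L$ contains an ideal $J$ of codimension $1$ in $I$ with $[L,I] \subseteq J$.

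The key preliminary observation is that for nilpotent $L$ and any nonzero ideal $I$, the inclusion $[L,I] \subseteq I$ is \emph{strict}. I would prove this by the standard lower-central-series argument: setting $C^0 = I$ and $C^{k+1} = [L,C^k]$, one gets $C^k \subseteq L^{k+1}$ by induction (since $[L,L^k] = L^{k+1}$), so $C^k = 0$ for $k$ large; were $[L,I] = I$, then $C^k = I \neq 0$ for all $k$, a contradiction. Hence $[L,I] \subsetneqq I$.

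Next I would check that $[L,I]$ is itself a Hom-subspace: this is exactly where multiplicativity enters, via $\alpha([L,I]) = [\alpha(L),\alpha(I)] \subseteq [L,I]$, using $\alpha(I) \subseteq I$. Thus $[L,I] \subsetneqq I$ is a strict inclusion of Hom-subspaces of the finite-dimensional Hom-vector space $I$. Now Lemma~\ref{l-ab} --- the place where algebraic closure of the ground field is used --- supplies a Hom-subspace $J$ with $[L,I] \subseteq J \subset I$ and $\dim I/J = 1$.

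It remains to see that this $J$ is actually an ideal of $L$, which is the one point requiring care. Since $J$ is a Hom-subspace by construction, I only need $[L,J] \subseteq J$; but $J \subseteq I$ gives $[L,J] \subseteq [L,I] \subseteq J$, so the ideal condition holds automatically (and $[J,J] \subseteq [L,J] \subseteq J$ makes $J$ a subalgebra as well). Thus $J$ is an ideal of $L$ of codimension $1$ in $I$ with $[L,I] \subseteq J$, and Lemma~\ref{lemma-ij} lets me conclude that $L$ is strongly nilpotent. The main obstacle is not any single hard computation but arranging that the hypotheses line up so that the general linear-algebraic Lemma~\ref{l-ab} delivers not merely a codimension-$1$ Hom-subspace but a genuine \emph{ideal}; the trick that makes this free is the containment $[L,I] \subseteq J$, which forces $[L,J] \subseteq J$.
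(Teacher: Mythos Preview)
Your proof is correct and follows essentially the same route as the paper's: reduce via Lemma~\ref{lemma-ij}, show $[L,I]\subsetneqq I$ by the lower-central-series argument, use multiplicativity to see $[L,I]$ is a Hom-subspace, apply Lemma~\ref{l-ab} to produce a codimension-$1$ Hom-subspace $J$ between $[L,I]$ and $I$, and observe that $[L,J]\subseteq[L,I]\subseteq J$ makes $J$ an ideal. The only cosmetic difference is that the paper records $\alpha([L,I])\subseteq[\alpha(L),\alpha(I)]$ rather than equality, but your conclusion is the same.
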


\begin{proof}
The ``if'' part is covered by Lemma \ref{lemma-nilp}, so let us prove the 
``only if'' part. 

Let $I$ be a nonzero ideal in a finite-dimensional nilpotent multiplicative 
Hom-Lie algebra $L$ defined over an algebraically closed field. First note that
$[L,I] \ne I$. Indeed, if $[L,I] = I$, then for any $n\in \mathbb N$, 
$[L, \dots, [L,[L,I]] \dots ] = I$ (where $L$ occurs $n$ times). But the 
left-hand side in the last equality lies in $L^{n+1}$, and hence vanishes for 
some $n$, a contradiction.

Since $L$ is multiplicative, 
$\alpha([L,I]) \subseteq [\alpha(L),\alpha(I)] \subseteq [L,I]$, and hence 
$[L,I]$ is an ideal in $L$. By Lemma \ref{l-ab}, there is a Hom-subspace $J$ of
$L$ such that $[L,I] \subseteq J \subset I$, and $J$ is of codimension $1$ in 
$I$. Then $[L,J] \subseteq [L,I] \subseteq J$, so $J$ is an ideal.

Thus we see that any nonzero ideal $I$ of $L$ contains an ideal $J$ of 
codimension $1$ in $I$ such that $[L,I] \subseteq J$, and by 
Lemma~\ref{lemma-ij} $L$ is strongly nilpotent.
\end{proof}

\section{Free algebras}\label{sec-free}

In this section we review the construction and elementary properties of free and
free nilpotent Hom-Lie algebras. As the previous sections, it does not contain 
anything really new; everything here is either implicitly available in the 
literature (see, for example, \cite[\S 49.2]{razmyslov} which treats the more 
general case of free linear algebras with several multiary operations), or is a
straightforward application of the general universal-algebraic machinery.

The main outcome of this section is the fact -- sounding almost tautologically 
trivial -- that any finite-dimensional nilpotent (multiplicative, nondegenerate)
Hom-Lie algebra is a homomorphic image of a -- suitably defined -- 
finite-dimensional $\mathbb N$-graded free nilpotent (multiplicative, 
nondegenerate) Hom-Lie algebra.

We start with an obvious

\begin{definition}
Let $G$ be an abelian semigroup. A Hom-Lie algebra $(L,\liebrack,\alpha)$ is 
called \emph{$G$-graded} if it is $G$-graded as an algebra, i.e. $L$ is 
decomposed as the direct sum $L = \bigoplus_{g \in G} L_g$ with 
$[L_g,L_h] \subseteq L_{g+h}$ for any $g,h \in G$, and each homogeneous 
component is stable under $\alpha$: $\alpha(L_g) \subseteq L_g$ for any 
$g\in G$.
\end{definition}

By abstract universal algebraic nonsense, free Hom-Lie algebras can be defined
as just free algebraic systems in the variety of Hom-Lie algebras considered as
vector spaces with one binary and one unary operation. 

A more concrete construction can run as follows. Let $X$ be a set. Consider all
(nonassociative) words in $X$, and all possible ``applications'' of $\alpha$ on
subwords in those words. More precisely, define recursively 
\begin{equation}\label{eq-f1}
F_1(X) = \set{\alpha^\ell(x)}{x \in X, \, \ell \in \mathbb N}
\end{equation}
(we assume that $\alpha^0(x) = x$), and, for $n > 1$, 
\begin{equation}\label{eq-fn}
F_n(X) = \bigcup_{k=1}^{n-1} \>
\set{\alpha^\ell(uv)}{u \in F_k(X), v\in F_{n-k}(X), \, \ell\in \mathbb N} .
\end{equation}

The multiplication between elements of $F(X) = \cup_{n \ge 1} F_n(X)$ is 
performed by concatenation; thus, $F_n(X) F_m(X) \subseteq F_{n+m}(X)$. The 
twist map $\alpha$ maps a word $u \in F(X)$ to the word $\alpha(u)$, and 
applying the relation $\alpha(\alpha^\ell(v)) = \alpha^{\ell+1}(v)$ for any 
$\ell \in \mathbb N$ and $v \in F(X)$; thus, $\alpha(F_n(X)) \subseteq F_n(X)$. 
Then $F(X)$ forms a magma with a unary operation $\alpha$, and the magma 
algebra $KF(X)$ over the ground field $K$, with $\alpha$ extended by linearity,
forms the free Hom-algebra with the generating set $X$. It is obvious that 
$KF(X)$ is an $\mathbb N$-graded Hom-algebra: 
\begin{equation}\label{eq-gr}
KF(X) = \bigoplus_{n\in \mathbb N} KF_n(X) .
\end{equation}

The free Hom-Lie algebra $\mathcal L(X)$ freely generated by a set $X$ is 
obtained as a quotient of $KF(X)$ by the ideal generated by elements of the form
$uv + vu$ and $(uv)\alpha(w) + (wu)\alpha(v) + (vw)\alpha(u)$, where 
$u,v,w \in KF(X)$. Since this ideal is homogeneous with respect to the grading 
(\ref{eq-gr}), $\mathcal L(X)$ is $\mathbb N$-graded too. 

Of course, this is equivalent to constructing $\mathcal L(X)$ directly by the 
inductive process (\ref{eq-f1})--(\ref{eq-fn}), where instead of juxtaposition 
$uv$ one takes the bracket $[u,v]$ (which is assumed to be anticommutative and 
satisfying the Hom-Jacobi identity). Clearly, the inductively constructed 
elements in this case will be no longer linearly independent, and the question 
of constructing a basis of a free Hom-Lie algebra, similar to one of the known 
bases of free Lie algebras, seems to be a difficult one.

In \cite[\S 3]{hellstrom-et-al} the elements of free Hom-algebras (Lie and 
associative) are represented as labeled binary trees: branching, as usual, 
corresponds to the binary multiplication, and labels are equal to the exponent 
$\ell$ in ``application'' of the power $\alpha^\ell$ to the given vertex.

According to the general universal-algebraic principles, it would be natural to
define the free nilpotent Hom-Lie algebra $\mathcal N_n(X)$ of degree $n$ and 
freely generated by a set $X$, as the quotient of $\mathcal L(X)$ by the ideal 
generated by $\mathcal L(X)^n$. This definition is, however, unsatisfactory for
our purposes: below, when proving an analog of the Ado theorem for nilpotent 
Hom-Lie algebras, we want to stay in the category of finite-dimensional 
algebras, but $\mathcal N_n(X)$, unlike its ordinary Lie-algebraic counterpart,
is obviously infinite-dimensional, as for every its element $u$ it contains all
the powers $\alpha^\ell(u)$, all of them are linearly independent. This can be 
remedied in the following way. Observe that for a finite-dimensional Hom-Lie 
algebra, the twist map $\alpha$, being a linear map on a finite-dimensional vector 
space, satisfies some polynomial equation 
\begin{equation}\label{eq-a}
f(\alpha) = 0 .
\end{equation} 

We fix this equation and add it -- or, rather, the identity $f(\alpha)(x) = 0$ 
for any $x\in L$ -- to the defining relations of the corresponding free algebra,
and define $\mathcal N_{n,f}(X)$ as the quotient of $\mathcal L(X)$ by the ideal
generated by $\mathcal L(X)^n$ and $f(\alpha)(\mathcal L(X))$ (or, what is the 
same, as the quotient of $\mathcal N_n(X)$ by the ideal generated by 
$f(\alpha)(\mathcal N_n(X))$). The ensuing algebra $\mathcal N_{n,f}(X)$ is, 
obviously, finite-dimensional. The ideal $\mathcal L(X)^n$ of $\mathcal L(X)$ is
homogeneous, and, since $\alpha$ preserves the $\mathbb N$-grading of 
$\mathcal L(X)$, the ideal generated by $f(\alpha)(\mathcal L(X))$ is 
homogeneous too, so $\mathcal N_{n,f}(X)$ remains to be $\mathbb N$-graded.
Any finite-dimensional nilpotent Hom-Lie algebra with a generating set $X$, of 
degree $n$, and satisfying the condition (\ref{eq-a}), is a quotient of $\mathcal N_{n,f}(X)$. 

In the multiplicative variant of all these, we need additionally factorize by
the ideal generated by elements of the form 
\begin{equation}\label{eq-alpha}
\alpha([u,v]) - [\alpha(u),\alpha(v)] ,
\end{equation}
where $u,v$ are elements from the respective free Hom-algebra. Roughly speaking,
that means that we may move all $\alpha$'s to the ``innermost positions''. In 
terms of the inductive process (\ref{eq-f1})--(\ref{eq-fn}) that means that 
keeping (\ref{eq-f1}), we may define the free multiplicative Hom-algebra as a 
free (nonassociative) algebra freely generated by the set $F_1(X)$. In terms of
the labeled trees used in \cite{hellstrom-et-al} that means that we label only
the terminal vertices. Let us denote the multiplicative analog of 
$\mathcal N_{n,f}(X)$, i.e. the quotient of the latter algebra by the ideal 
generated by the elements of the form (\ref{eq-alpha}), by 
$\mathcal M_{n,f}(X)$. Obviously, $\mathcal M_{n,f}(X)$ remains to be 
$\mathbb N$-graded. Any finite-dimensional nilpotent multiplicative Hom-Lie 
algebra with a generating set $X$, of degree $n$, and satisfying the condition 
(\ref{eq-a}), is a quotient of $\mathcal M_{n,f}(X)$. 

Note that while finite-dimensional nondegenerate Hom-algebras are closed with 
respect to homomorphic images, nondegenerate Hom-algebras in general are not 
closed, so they do not form a variety; but they form a quasivariety, and hence 
we can still speak about free nondegenerate Hom-Lie algebras. In a 
finite-dimensional situation the nondegeneracy of $\alpha$ can be expressed in 
terms of the polynomial $f$: indeed, we may take $f$ to be the characteristic or the minimal
polynomial of $\alpha$, and then the nondegeneracy of $\alpha$ is equivalent to
the nonvanishing of the free term of $f$. Thus, free (multiplicative) 
nondegenerate nilpotent Hom-Lie algebras are merely $\mathcal N_{n,f}(X)$ 
(or $\mathcal M_{n,f}(X)$) with $f$ having the nonvanishing free term.

It is clear that all the free Hom-algebras considered here do not depend on the
set $X$ itself, but merely on its cardinality $|X|$. In particular, instead of 
$\mathcal N_{n,f}(X)$ and $\mathcal M_{n,f}(X)$, we will write 
$\mathcal N_{k,n,f}$ and $\mathcal M_{k,n,f}$ respectively, where $k = |X|$.

\section{Ado theorem for nilpotent Hom-Lie algebras}\label{sec-ado}

As explained in the Introduction, we follow the scheme of \cite{ado}.

It is obvious that if a Hom-Lie algebra has a finite-dimensional faithful
(multiplicative, nondegenerate) representation, then so does any its subalgebra.

\begin{lemma}\label{l-e}
A finite-dimensional $\mathbb N$-graded Hom-Lie algebra $L$ is embedded into the
current Hom-Lie algebra $L \otimes tK[t]/(t^n)$ for some $n \in \mathbb N$.
\end{lemma}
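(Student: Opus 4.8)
The plan is to imitate the classical embedding of a graded Lie algebra into a current algebra over a truncated polynomial ring, sending each homogeneous element to its tensor product with the corresponding power of $t$. Since $L$ is finite-dimensional, its $\mathbb N$-grading $L = \bigoplus_i L_i$ has only finitely many nonzero homogeneous components; I would let $N$ be the largest integer with $L_N \neq 0$ and set $n = N+1$. I would then define a linear map $\phi\colon L \to L \otimes tK[t]/(t^n)$ on homogeneous elements by $\phi(x) = x \otimes t^i$ for $x \in L_i$, extended by linearity, and claim that $\phi$ is the required embedding.

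The verification that $\phi$ is a homomorphism of Hom-Lie algebras splits into two routine checks. Compatibility with the twist maps is immediate from $\alpha(L_i) \subseteq L_i$: for $x \in L_i$ both $\phi(\alpha(x))$ and $\widehat\alpha(\phi(x))$ equal $\alpha(x) \otimes t^i$. For the bracket, I would take homogeneous $x \in L_i$ and $y \in L_j$ and compute, using the current-algebra multiplication, that $[\phi(x),\phi(y)] = [x,y] \otimes t^i t^j = [x,y] \otimes t^{i+j}$; on the other hand, $[x,y] \in L_{i+j}$ by the grading, so $\phi([x,y]) = [x,y] \otimes t^{i+j}$ as well. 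The two sides agree, and they vanish simultaneously and consistently when $i+j \geq n$ (since then $t^{i+j} = 0$ in $tK[t]/(t^n)$), so no case distinction is needed.

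For injectivity I would decompose the target as $\bigoplus_{i,k} L_i \otimes K t^k$ with $1 \leq k \leq n-1$; the image $\phi(x_i) = x_i \otimes t^i$ of a degree-$i$ component lies in the single summand $L_i \otimes K t^i$. Because $i \leq N = n-1$, the element $t^i$ is nonzero in $tK[t]/(t^n)$, and distinct degrees land in distinct summands, so $\phi\big(\sum_i x_i\big) = 0$ forces each $x_i \otimes t^i = 0$ and hence each $x_i = 0$.

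The whole argument is essentially bookkeeping once the truncation parameter is chosen; the one step that genuinely uses the finite-dimensionality hypothesis, and which I regard as the crux, is the existence of the top degree $N$, for this is precisely what lets $n$ be taken large enough that every nonzero homogeneous component survives in $tK[t]/(t^n)$ and the map stays injective. I therefore do not expect any substantial obstacle beyond keeping careful track of the grading.
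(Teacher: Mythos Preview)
Your proof is correct and is precisely the standard argument: the paper itself does not spell out the proof but cites \cite[Lemma~2.1]{ado} as a verbatim repetition, and what you have written is exactly that embedding $x\mapsto x\otimes t^i$ for $x\in L_i$, with the truncation parameter chosen just above the top nonzero degree. Your checks of the twist-map and bracket compatibility, and of injectivity, are all in order.
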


(Note that a finite-dimensional $\mathbb N$-graded algebra is necessarily 
nilpotent).

\begin{proof}
Verbatim repetition of the proof of \cite[Lemma 2.1]{ado}.
\end{proof}

\begin{lemma}\label{l-d}
If a finite-dimensional (nilpotent, multiplicative, nondegenerate) Hom-Lie 
algebra has a nondegenerate $\alpha$-derivation, then it has a 
finite-dimensional faithful (nilpotent, multiplicative, nondegenerate) 
representation.
\end{lemma}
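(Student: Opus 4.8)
The plan is to mimic the classical construction, which in the Lie case realizes a nilpotent algebra carrying a nonsingular derivation inside the adjoint representation of its one-dimensional extension by that derivation. First I would extend $L$ by the derivation: form the Hom-vector space $\widetilde{L} = L \oplus KD$, declare $[x,D] = -[D,x] = -D(x)$ for $x \in L$ and $[D,D] = 0$, and set $\alpha(D) = D$ (by abuse of notation $\alpha$ also denotes the extended twist map). By the structural interpretation of $\alpha$-derivations recorded after formula (\ref{eq-der}), the triple $(\widetilde L, \liebrack, \alpha)$ is again a Hom-Lie algebra, and it is multiplicative whenever $L$ is, precisely because the $\alpha$-derivation condition in the multiplicative case includes $D \circ \alpha = \alpha \circ D$. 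Moreover the extended twist map is nondegenerate whenever $\alpha$ is: if $\alpha(x) + cD = 0$ with $x \in L$ and $c \in K$, then comparing the $KD$-components gives $c = 0$ and hence $\alpha(x) = 0$, whence $x = 0$.

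Next I would take the adjoint representation of $\widetilde L$ and restrict it to $L$. By Lemma~\ref{lemma-ad}, the map $\ad \colon \widetilde L \to \End(\widetilde L)$ is a (multiplicative) representation of $\widetilde L$ on the Hom-vector space $(\widetilde L, \alpha)$, nondegenerate since the latter is; and since $L$ is a subalgebra (in fact an ideal) of $\widetilde L$, the restriction $\rho = \ad|_L$ is a (multiplicative, nondegenerate) representation of $L$ in $(\widetilde L, \alpha)$, of finite dimension $\dim L + 1$. The crucial point is faithfulness: if $\rho(x) = 0$ for some $x \in L$, then in particular $\rho(x)(D) = [x,D] = -D(x) = 0$, and since $D$ is nondegenerate (i.e. injective) this forces $x = 0$. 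Thus $\rho$ is faithful; intuitively, it detects exactly the part of $L$ that the plain adjoint representation of $L$ misses, namely the center $Z(L)$, which is precisely where $D$ is brought to bear.

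It remains to check nilpotency of $\rho$ when $L$ is nilpotent, and this is the step I expect to require the most care, since $\widetilde L$ itself need not be nilpotent (the operator $\ad(D)$ may well be invertible on $L$). The saving observation is that we only ever compose operators $\rho(x)$ with $x \in L$. On the subspace $L \subseteq \widetilde L$ each $\rho(x)$ acts as $\ad_L(x)$, so a product $\rho(x_1) \circ \dots \circ \rho(x_{n-1})$ annihilates $L$ once $n-1$ exceeds the nilindex of the adjoint action of $L$ (a product of $\ad_L$'s lands in a member of the lower central series of $L$, which eventually vanishes). On the remaining generator $D$ we have $\rho(x_n)(D) = -D(x_n) \in L$, so one further factor lands us in $L$, after which the previous estimate applies. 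Hence $\rho(x_1) \circ \dots \circ \rho(x_n) = 0$ for $n$ large enough, so the associative algebra generated by $\rho(L)$ is nilpotent and $\rho$ is a nilpotent representation. Collecting the above, $\rho$ is the desired finite-dimensional faithful (nilpotent, multiplicative, nondegenerate) representation of $L$.
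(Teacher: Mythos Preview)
Your proof is correct and follows exactly the approach of the paper: form the semidirect sum $\widetilde L = L \oplus KD$ and let $L$ act on it by restricting the adjoint representation. The paper's proof is a two-sentence sketch of precisely this construction; you have simply filled in the verifications (Hom-Jacobi, multiplicativity, nondegeneracy of the extended twist, faithfulness via injectivity of $D$, and nilpotency via $\ad(x)(L^i)\subseteq L^{i+1}$) that the paper leaves implicit.
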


\begin{proof}
Let $L$ be a finite-dimensional Hom-Lie algebra having a nondegenerate 
$\alpha$-derivation $D$. The desired representation is the action of $L$ on the
ambient Hom-Lie algebra $L \oplus KD$. If $L$ is nilpotent, or multiplicative, 
or nondegenerate, then this action is respectively nilpotent, or multiplicative,
or nondegenerate too.
\end{proof}

Note that a classical result of Jacobson, \cite{jacobson}, says that a 
finite-dimensional Lie algebra over a field of characteristic zero having a 
nondegenerate derivation, is necessarily nilpotent. It appears to be an 
interesting question whether the same is true for Hom-Lie algebras; if true, the
condition of nilpotency in Lemma~\ref{l-d} is redundant.

One can generalize Lemma~\ref{l-d} by considering $1$-cocycles in an arbitrary 
module instead of $\alpha$-derivations, similarly to the ordinary Lie case 
(\cite[Lemma 2.3]{ado}). However, this will require us to analyze various 
definitions in the literature of cohomology of Hom-Lie algebras, even more 
diverse then those of derivations or representations; we want to avoid this 
task here, and confine ourselves with Lemma \ref{l-d} which is enough for our 
purposes.

\begin{lemma}\label{l-f}
A finite-dimensional (multiplicative) nondegenerate $\mathbb N$-graded Hom-Lie 
algebra over a field of characteristic zero has a finite-dimensional faithful 
nilpotent (multiplicative) nondegenerate representation.
\end{lemma}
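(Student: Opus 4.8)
The plan is to embed $L$ into a current Hom-Lie algebra that carries a nondegenerate $\alpha$-derivation, and then to quote Lemma~\ref{l-d}. First I would apply Lemma~\ref{l-e} to embed $L$ into the current Hom-Lie algebra $C = L \otimes tK[t]/(t^n)$, taking $n$ larger than the top degree occurring in the grading of $L$; by Lemma~\ref{l-cur} the algebra $C$ is again nondegenerate, and multiplicative whenever $L$ is, while, being finite-dimensional and $\mathbb N$-graded (now by the power of $t$), it is nilpotent. A faithful nilpotent (multiplicative, nondegenerate) representation of $C$ restricts to one of the subalgebra $L$: nondegeneracy and multiplicativity descend to subalgebras by the remark opening this section, and nilpotency descends because the associative algebra generated by the image of $L$ is contained in the nilpotent associative algebra generated by the image of $C$. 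Hence it is enough to produce such a representation of $C$.

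For that I would equip $C$ with a nondegenerate $\alpha$-derivation and invoke Lemma~\ref{l-d}. Let $\delta$ be the Euler derivation of $A = tK[t]/(t^n)$ determined by $\delta(t^i) = i\,t^i$; in characteristic zero it is invertible, since $1 \le i \le n-1$ forces $i \ne 0$. In the multiplicative case I would set $D = \alpha \otimes \delta$. Checking the identity $D([u,v]) = [D(u),\widehat\alpha(v)] + [\widehat\alpha(u),D(v)]$ on decomposable tensors $u = x \otimes a$ and $v = y \otimes b$, and using that $\delta$ is a derivation of $A$, collapses everything to the single requirement $\alpha([x,y]) = [\alpha(x),\alpha(y)]$, i.e. to multiplicativity; furthermore $D$ is invertible and commutes with $\widehat\alpha = \alpha \otimes \id$, so it is a nondegenerate $\alpha$-derivation in the multiplicative sense. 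Lemma~\ref{l-d} then yields the required faithful nilpotent multiplicative nondegenerate representation of $C$.

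The step I expect to be the main obstacle is the construction of the $\alpha$-derivation in the merely nondegenerate, non-multiplicative case. The twist $\widehat\alpha$ inside the $\alpha$-derivation identity forces $\alpha$ into the bracket of $L$, and neither product derivation survives: $\id_L \otimes \delta$ would demand $[x,\alpha(y)] = [\alpha(x),y] = [x,y]$ for all $x,y$, whereas $\alpha \otimes \delta$ founders on the failure of $\alpha([x,y]) = [\alpha(x),\alpha(y)]$. The real work is therefore to find a nondegenerate $\alpha$-derivation of $C$ exploiting its double grading --- by the degree inherited from $L$ and by the power of $t$ --- or else to reduce the nondegenerate case to the multiplicative one. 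Everything else (the embedding, the inheritance of nilpotency and nondegeneracy, and the restriction to $L$) I regard as routine.
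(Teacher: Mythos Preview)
Your approach coincides with the paper's: embed $L$ into the current algebra $C = L \otimes tK[t]/(t^n)$ via Lemma~\ref{l-e}, note that $C$ inherits nondegeneracy (and multiplicativity) by Lemma~\ref{l-cur}, exhibit $D = \alpha \otimes t\frac{\dcobound}{\dcobound t}$ as a nondegenerate $\alpha$-derivation of $C$, apply Lemma~\ref{l-d}, and restrict to $L$. In the multiplicative case your verification is precisely what the paper has in mind.

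The obstacle you flag in the non-multiplicative case is real, and the paper does not resolve it either: the proof there simply asserts that $\alpha \otimes t\frac{\dcobound}{\dcobound t}$ is an $\alpha$-derivation of $C$, with no separate treatment when $L$ is merely nondegenerate. Your computation is correct --- on decomposable tensors the $\widehat\alpha$-derivation identity for $D = \alpha \otimes \delta$ reduces to $\alpha([x,y]) \otimes \delta(ab) = [\alpha(x),\alpha(y)] \otimes \delta(ab)$, and since $\delta$ is invertible in characteristic zero this forces $\alpha([x,y]) = [\alpha(x),\alpha(y)]$. So the given $D$ is an $\alpha$-derivation of $C$ \emph{only} when $L$ is multiplicative, and neither $\id_L \otimes \delta$ nor any obvious variant repairs this. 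The multiplicative part of the lemma --- which is all that the main Theorem~\ref{th-ado} actually requires --- is thus fully established by your argument (and the paper's); the non-multiplicative clause, and consequently Lemma~\ref{l-free}(i), appear to share the gap you identified.
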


\begin{proof}
Let $(L,\liebrack,\alpha)$ be such a Hom-Lie algebra. By Lemma \ref{l-e}, $L$ is
embedded into the nilpotent current Hom-Lie algebra $L \otimes tK[t]/(t^n)$. By 
Lemma~\ref{l-cur}, the latter Hom-Lie algebra is nondegenerate, and is 
multiplicative if $L$ is so. The map 
$\alpha \otimes t\frac{\dcobound}{\dcobound t}$ is a nondegenerate $\alpha$-derivation of
$L \otimes tK[t]/(t^n)$, and hence by Lemma \ref{l-d} $L \otimes tK[t]/(t^n)$ 
has a finite-dimensional faithful nilpotent nondegenerate (and multiplicative, 
if $L$ is multiplicative) representation; and so does its subalgebra $L$.
\end{proof}

Both conditions of nondegeneracy and zero characteristic are needed here to 
ensure nondegeneracy of $\alpha \otimes t\frac{\dcobound}{\dcobound t}$: the 
nondegeneracy of $\alpha$ ensures that it acts nondegenerately on the first 
tensor factor $L$, and the zero characteristic ensures that the Euler derivation
$t\frac{\dcobound}{\dcobound t}$ acts nondegenerately on the second tensor 
factor $tK[t]/(t^n)$: $t^i \mapsto it^i$ for $i\in \mathbb N$. We conjecture 
Lemma \ref{l-f} (in fact, the whole Ado theorem) remains to be true without 
those restrictions, but a proof of this will require a different approach (see 
discussion in \cite[\S 3]{ado} most of which may be applicable also to the 
Hom-Lie case).

\begin{lemma}\label{l-free}
Suppose the ground field is of characteristic zero. For any $k,n \in \mathbb N$,
and any polynomial $f$ with the nonzero free term:
\begin{enumerate}[\upshape(i)]
\item
the free nilpotent Hom-Lie algebra $\mathcal N_{k,n,f}$ has a finite-dimensional
faithful nilpotent nondegenerate representation;
\item\label{it-ii}
the free nilpotent multiplicative Hom-Lie algebra $\mathcal M_{k,n,f}$ has a 
finite-dimensional faithful nilpotent multiplicative nondegenerate 
representation.
\end{enumerate}
\end{lemma}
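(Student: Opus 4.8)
The plan is to recognize both parts as immediate consequences of Lemma~\ref{l-f}, so that the entire argument reduces to checking that the free algebras in question satisfy the hypotheses of that lemma. Recall that Lemma~\ref{l-f} produces, for any finite-dimensional nondegenerate $\mathbb N$-graded Hom-Lie algebra over a field of characteristic zero, a finite-dimensional faithful nilpotent nondegenerate representation --- and a multiplicative one if the algebra itself is multiplicative. Thus it suffices to verify that $\mathcal N_{k,n,f}$ (for part (i)) and $\mathcal M_{k,n,f}$ (for part (ii)) belong to this class.

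All the needed properties are recorded in the construction of \S~\ref{sec-free}. By construction, both $\mathcal N_{k,n,f}$ and $\mathcal M_{k,n,f}$ are finite-dimensional and $\mathbb N$-graded, and $\mathcal M_{k,n,f}$ is moreover multiplicative. The only item deserving a word is nondegeneracy, which is where the hypothesis on $f$ enters. Writing $f(t) = a_0 + a_1 t + \dots + a_d t^d$ with $a_0 \ne 0$, the defining relation $f(\alpha) = 0$ gives $\id = \alpha \circ \big(-a_0^{-1}(a_1\,\id + a_2\alpha + \dots + a_d\alpha^{d-1})\big)$, so $\alpha$ is invertible and in particular nondegenerate; this is exactly the equivalence between nondegeneracy of $\alpha$ and the nonvanishing of the free term of $f$ noted in \S~\ref{sec-free}.

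With the hypotheses confirmed, part (i) follows by applying Lemma~\ref{l-f} to the finite-dimensional nondegenerate $\mathbb N$-graded algebra $\mathcal N_{k,n,f}$, and part (ii) by applying the multiplicative version of the same lemma to $\mathcal M_{k,n,f}$. I do not expect a genuine obstacle at this stage: the substantive content has already been discharged, partly in \S~\ref{sec-free} (finite-dimensionality, the grading, and the link between nondegeneracy and the free term of $f$), and partly in Lemma~\ref{l-f} itself, which in turn rests on the current-algebra embedding of Lemma~\ref{l-e} and the nondegenerate $\alpha$-derivation $\alpha \otimes t\frac{\dcobound}{\dcobound t}$ supplied through Lemma~\ref{l-d}. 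The present lemma merely certifies that the free algebras land in precisely the category to which Lemma~\ref{l-f} applies, so the proof is a matter of matching hypotheses.
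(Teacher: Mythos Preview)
Your proposal is correct and follows exactly the paper's own argument: the paper's proof consists of the single observation that, as noted in \S\ref{sec-free}, the algebras $\mathcal N_{k,n,f}$ and $\mathcal M_{k,n,f}$ are finite-dimensional, $\mathbb N$-graded, and nondegenerate, followed by an invocation of Lemma~\ref{l-f}. Your additional sentence spelling out why $a_0 \ne 0$ forces $\alpha$ to be invertible is a harmless elaboration of what the paper already records in \S\ref{sec-free}.
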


\begin{proof}
As noted in \S \ref{sec-free}, the algebras in question are finite-dimensional, 
$\mathbb N$-graded, and nondegenerate. Apply Lemma \ref{l-f}.
\end{proof}

The following lemma shows that instances of ``local faithfulness'' can be 
assembled to a ``global'' one.

\begin{lemma}\label{l-l}
Let $L$ be a finite-dimensional Hom-Lie algebra such that for any nonzero 
$x\in L$ there is a finite-dimensional (nilpotent, multiplicative, 
nondegenerate) representation $\rho_x$ of $L$ such that $\rho_x(x) \ne 0$. Then
$L$ has a finite-dimensional faithful (nilpotent, multiplicative, nondegenerate)
representation.
\end{lemma}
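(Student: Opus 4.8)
The plan is to assemble a faithful representation as a finite direct sum of the given ``local'' representations $\rho_x$, exploiting the finite-dimensionality of $L$ to keep the construction finite. The direct sum of representations is already available (it is one of the ``obvious standard'' constructions recorded in \S\ref{sec-p}), so the only real work is to select finitely many of the $\rho_x$ whose kernels meet in zero, and then to check that the direct sum inherits each of the optional properties.

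First I would record the elementary observation that the kernel $\Ker \rho_x = \set{y \in L}{\rho_x(y) = 0}$ of each representation is a linear subspace of $L$, and that by hypothesis $x \notin \Ker \rho_x$, so this subspace is proper. Next I would produce finitely many representations whose kernels intersect trivially, by a descending-dimension argument entirely parallel to the classical Lie case. Start with any nonzero $x_1$ and put $K_1 = \Ker \rho_{x_1}$, a proper subspace. If $K_1 \ne 0$, choose a nonzero $x_2 \in K_1$ and put $K_2 = K_1 \cap \Ker \rho_{x_2}$; since $x_2 \in K_1 \setminus \Ker \rho_{x_2}$, we have $\dim K_2 < \dim K_1$. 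Iterating, the strictly decreasing chain of dimensions reaches $0$ after at most $\dim L$ steps, yielding representations $\rho_1, \dots, \rho_m$ (each a $\rho_{x_i}$) with $\bigcap_{i=1}^m \Ker \rho_i = 0$.

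Then I would take the direct sum $\rho = \rho_1 \oplus \dots \oplus \rho_m$, acting on the direct sum Hom-vector space $(V_1 \oplus \dots \oplus V_m, \beta_1 \oplus \dots \oplus \beta_m)$. Its kernel equals $\bigcap_i \Ker \rho_i = 0$, so $\rho$ is faithful and finite-dimensional. Finally I would verify that $\rho$ inherits whichever of the optional adjectives the $\rho_i$ carry: nondegeneracy holds because $\bigoplus_i \beta_i$ has zero kernel as soon as each $\beta_i$ does; multiplicativity holds because identity (\ref{eq-m}) is checked separately on each summand; and nilpotency holds by taking $N$ to be the maximum of the nilindices of the $\rho_i$, whereupon any composition $\rho(x_{j_1}) \circ \dots \circ \rho(x_{j_N})$ vanishes on every summand and hence vanishes.

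There is no genuine obstacle in this lemma; it is the standard ``globalization'' step. The only points that require (routine) care are the descending-dimension argument that makes the selected family finite, and the bookkeeping showing that the block-diagonal action preserves nilpotency via the maximum of the individual nilindices.
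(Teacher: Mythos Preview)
Your proposal is correct and follows precisely the standard descending-kernel argument that the paper invokes by reference to \cite[Lemma 2.7]{ado}; the paper's own proof merely cites that lemma verbatim and then notes, as you do, that the direct sum inherits nilpotency, multiplicativity, and nondegeneracy from its summands.
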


\begin{proof}
The proof repeats verbatim those of \cite[Lemma 2.7]{ado}. If each $\rho_x$ is
nilpotent, or multiplicative, or nondegenerate, the resulting representation,
being assembled as a direct sum of nilpotent, or multiplicative, or 
nondegenerate representations, is itself nilpotent, or multiplicative, or
nondegenerate.
\end{proof}

The following lemma, of combinatorial character, is an almost verbatim 
repetition of \cite[Lemma 2.10]{ado} which treats the Lie case, and shows that 
one can always distinguish elements of a finite-dimensional nilpotent Hom-Lie 
algebra by kernels of suitable representations.

\begin{lemma}\label{l-c}
Let $(L,\liebrack,\alpha)$ be a Hom-Lie algebra over a field $K$ of 
characteristic $\ne 2$, having a finite-dimensional faithful nilpotent 
multiplicative nondegenerate representation. Then for any two linearly 
independent elements $x,y \in L$ such that $\alpha(x) = \lambda x$ for some 
nonzero $\lambda \in K$, there is a finite-dimensional nilpotent multiplicative
nondegenerate representation $\rho$ of $L$ such that 
$\Ker \rho(x) \not\subset \Ker \rho(y)$.
\end{lemma}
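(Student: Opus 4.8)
The plan is to start from the hypothesized faithful nilpotent multiplicative nondegenerate representation $\sigma : L \to \End(V)$, with twist map $\beta$ on $V$; since $(V,\beta)$ is nondegenerate and finite-dimensional, $\beta$ is invertible. Faithfulness makes $\sigma(x)$ and $\sigma(y)$ linearly independent (in particular both nonzero), and multiplicativity together with $\alpha(x)=\lambda x$ yields the decisive relation $\beta\circ\sigma(x)=\lambda\,\sigma(x)\circ\beta$, i.e. $\sigma(x)$ is a $\lambda$-eigenvector for conjugation by $\beta$. If already $\Ker\sigma(x)\not\subset\Ker\sigma(y)$ there is nothing to prove, so I would assume the ``bad case'' $\Ker\sigma(x)\subseteq\Ker\sigma(y)$, equivalently $\sigma(y)=C\circ\sigma(x)$ for some $C\in\End(V)$, and then manufacture extra room by tensoring.

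Concretely, I would pass to the tensor square $\rho=\sigma\otimes\sigma$ on $(V\otimes V,\,\beta\otimes\beta)$; by Proposition \ref{f-tens} and Lemma \ref{lemma-tp} this is again a finite-dimensional nilpotent multiplicative representation, and it is nondegenerate because $\beta\otimes\beta$ is invertible, so any vector I produce automatically sits in an admissible representation. Writing $X=\sigma(x)$, $Y=\sigma(y)$, I would choose $u\in\Ker X^2\setminus\Ker X$ (such $u$ exists since $X$ is a nonzero nilpotent operator, so $\mathrm{im}\,X\cap\Ker X\neq 0$) and set $a=Xu\in\Ker X$. The proposed separating vector is $w=a\otimes u-\lambda\,(\beta u)\otimes(\beta^{-1}a)$. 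Using $X^2u=0$, $a\in\Ker X$, and the eigenvector relations $X\beta=\lambda^{-1}\beta X$ and $X\beta^{-1}=\lambda\beta^{-1}X$, a short computation gives $\rho(x)w=\beta a\otimes a-\beta a\otimes a=0$, so $w\in\Ker\rho(x)$.

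It then remains to verify $\rho(y)w\neq 0$. Since $\Ker X\subseteq\Ker Y$, both $a$ and $\beta^{-1}a$ lie in $\Ker Y$, and one computes $\rho(y)w=\beta a\otimes Yu-\lambda\,(Y\beta u)\otimes a$; I would show this is nonzero using the linear independence of $X$ and $Y$ together with $\lambda\neq 0$ and $\mathrm{char}\,K\neq 2$, passing to a higher tensor power $\sigma^{\otimes m}$ should the square degenerate. In the model situation $\beta=\id$, $\lambda=1$ the vector $w$ is just the antisymmetric tensor $a\otimes u-u\otimes a$ and $\rho(y)w$ comes out a nonzero multiple of it, which is exactly the place where $\mathrm{char}\,K\neq 2$ is felt. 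This final verification -- that the one-slot factorization $Y=CX$ forced by the bad case cannot survive the tensor construction, so that the cross terms persist -- is the combinatorial heart of the argument and the step I expect to require the most care; everything else is bookkeeping with the relation $\beta X=\lambda X\beta$. The whole scheme parallels \cite[Lemma 2.10]{ado}, with these twist-map relations playing the role of the trivial commutations available in the Lie case.
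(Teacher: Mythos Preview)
Your setup is sound up to the point where you exhibit $w\in\Ker\rho(x)$, but the ``final verification'' that $\rho(y)w\neq 0$ is not just the step requiring the most care --- it is a genuine gap, and your candidate vector can fail. Take the ordinary Lie case $\beta=\id$, $\lambda=1$, with $L$ the two-dimensional abelian Lie algebra and $\sigma$ the faithful nilpotent representation on $K^3$ given by the single Jordan block $X=\sigma(x)$ (so $Xe_1=0$, $Xe_2=e_1$, $Xe_3=e_2$) together with $Y=\sigma(y)$ defined by $Ye_1=Ye_2=0$, $Ye_3=e_1$. Then $\Ker X\subseteq\Ker Y$, and your recipe forces $u=e_2$ (up to irrelevant $\Ker X$-components), $a=e_1$, so $w=e_1\otimes e_2-e_2\otimes e_1$; but $\rho(y)w=0$. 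In this example the tensor square does separate the kernels, yet only through the vector $e_2\otimes e_2-e_1\otimes e_3-e_3\otimes e_1$, which lives one level higher in the nilpotency filtration than anything your construction can reach. Restricting to $u\in\Ker X^2$ is exactly what makes the argument break: you are probing only the bottom two layers of the flag $\Ker X\subset\Ker X^2\subset\cdots$, and the obstruction can hide above.

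The paper's proof is organized quite differently and this difference is essential. Rather than trying to produce one explicit separating vector, it argues by contradiction over \emph{all} finite-dimensional nilpotent multiplicative nondegenerate representations simultaneously: assuming $\Ker\rho(x)\subseteq\Ker\rho(y)$ for every such $\rho$, one writes $\rho(y)=h_\rho\circ\rho(x)$ and then feeds the tensor product $\rho\otimes\tau$ with vectors of the form $\rho(x)^{i}(v)\otimes\tau(x)^{j}(w)$ for a full range of exponents $i+j=n+m-(k+2)$. This yields, by an induction descending through $k=1,2,\dots,n$, that $h_\rho$ acts as a fixed scalar $\mu$ on each image $\rho(x)^{n-k}(V)$, eventually forcing $h_\rho=\mu\,\id$ and hence $\rho(y-\mu x)=0$ for every $\rho$, contradicting faithfulness. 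The place where $\mathrm{char}\,K\neq2$ enters is the step $\rho=\tau$ giving $\eta_{\rho,\rho}=-\eta_{\rho,\rho}$. Your fallback ``pass to a higher tensor power should the square degenerate'' is, in effect, gesturing at this descent, but without the systematic bookkeeping over all exponent pairs it does not go through; the inductive scheme across the whole filtration, not a single clever vector at the bottom, is the missing idea.
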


\begin{proof}
Suppose the contrary: there are two linearly independent elements $x,y \in L$ 
such that $\alpha(x) = \lambda x$ for some nonzero $\lambda \in K$, and for any 
finite-dimensional nilpotent multiplicative nondegenerate representation 
$\rho: L \to \End(V)$, it holds that $\Ker \rho(x) \subseteq \Ker \rho(y)$. 
Then there is a linear map $h_\rho: V \to V$ (depending on $\rho$) such that 
\begin{equation}\label{eq-h}
\rho(y) = h_\rho \circ \rho(x) .
\end{equation}

Let $\rho: L \to \End(V)$ and $\tau: L \to \End(W)$ be two finite-dimensional
nilpotent multiplicative nondegenerate representations of $L$ in Hom-vector 
spaces $(V,\beta)$ and $(W,\gamma)$, and let $n$ and $m$ be the indices of 
nilpotency of the linear maps $\rho(x)$ and $\tau(x)$, respectively. According 
to Proposition~\ref{f-tens} and Lemma~\ref{lemma-tp}, the tensor product 
$\rho \otimes \tau$ is also multiplicative and nilpotent, and it is obviously 
nondegenerate. Writing the condition (\ref{eq-h}) for $\rho \otimes \tau$, with
both sides applied to elements 
$\rho(x)^{n-2}(v) \otimes \tau(x)^{m-1}\big(\gamma^{-1}(w)\big)$ and 
$\rho(x)^{n-1}\big(\beta^{-1}(v)\big) \otimes \tau(x)^{m-2}(w)$, where $v \in V$
and $w\in W$, and taking into account the same condition for $\rho$ and for 
$\tau$, we get respectively:
\begin{equation}\label{eq-d1}
h_\rho\Big(
\rho(x)^{n-1}(v)\Big) \otimes \gamma\Big(\tau(x)^{m-1}\big(\gamma^{-1}(w)\big)
\Big)
= 
h_{\rho \otimes \tau}\Big(
\rho(x)^{n-1}(v) \otimes \gamma\big(\tau(x)^{m-1}\big(\gamma^{-1}(w)\big)\big)
\Big)
\end{equation}
and
\begin{equation}\label{eq-d2}
\beta\Big(\rho(x)^{n-1}\big(\beta^{-1}(v)\big)\Big) \otimes 
h_\tau\Big(\tau(x)^{m-1}(w)\Big)
= 
h_{\rho \otimes \tau}\Big(
\beta\big(\rho(x)^{n-1}\big(\beta^{-1}(v)\big)\big) \otimes \tau(x)^{m-1}(w)
\Big) .
\end{equation}

Since $\alpha(x) = \lambda x$, and due to the condition of multiplicativity 
(\ref{eq-m}), we have 
\begin{equation}\label{eq-b}
\beta \circ \rho(x)^k = \lambda^k \rho(x)^k \circ \beta
\end{equation}
for any $k \in \mathbb N$, and similarly for $\tau$ and $\gamma$, so the 
equalities (\ref{eq-d1})--(\ref{eq-d2}) can be rewritten as
$$
h_\rho\Big(\rho(x)^{n-1}(v)\Big) \otimes \tau(x)^{m-1}(w)
= 
h_{\rho \otimes \tau}\Big(\rho(x)^{n-1}(v) \otimes \tau(x)^{m-1}(w)\Big)
$$
and
$$
\rho(x)^{n-1}(v) \otimes h_\tau\Big(\tau(x)^{m-1}(w)\Big)
= 
h_{\rho \otimes \tau}\Big(\rho(x)^{n-1}(v) \otimes \tau(x)^{m-1}(w)\Big)
$$
for any $v\in V$ and $w\in W$, respectively. This implies that the linear maps 
$h_\rho \otimes \id$ and $\id \otimes h_\tau$ coincide on the vector space
$\rho(x)^{n-1}(V) \otimes \tau(x)^{m-1}(W)$, whence
\begin{equation}\label{eq-mu}
h_\rho\Big(\rho(x)^{n-1}(v)\Big) = \mu \rho(x)^{n-1}(v)
\end{equation}
and
$$
h_\tau\Big(\tau(x)^{m-1}(w)\Big) = \mu \tau(x)^{m-1}(w)
$$
for some $\mu \in K$. Since this holds for any pair of representations $\rho$, 
$\tau$, we get that (\ref{eq-mu}) holds for any finite-dimensional nilpotent 
multiplicative nondegenerate representation $\rho$ of $L$ for some uniform value
of $\mu$.

Further, writing the condition (\ref{eq-h}) for the tensor product 
$\rho \otimes \tau$ applied to elements 
$\rho(x)^{n-3}\big(\beta(v)\big) \otimes \tau(x)^{m-1}\big(\gamma^{-1}(w)\big)$,
$\rho(x)^{n-1}\big(\beta^{-1}(v)\big) \otimes \tau(x)^{m-3}\big(\gamma(w)\big)$,
and $\rho(x)^{n-2}(v) \otimes \tau(x)^{m-2}(w)$, and taking into account 
(\ref{eq-mu}), we get respectively:
$$
h_\rho\Big(\rho(x)^{n-2}\big(\beta(v)\big)\Big) \otimes \tau(x)^{m-1}(w)
= 
h_{\rho \otimes \tau}\Big(\rho(x)^{n-2}\big(\beta(v)\big) \otimes \tau(x)^{m-1}(w)\Big)
,
$$
$$
\rho(x)^{n-1}(v) \otimes h_\tau\Big(\tau(x)^{m-2}\big(\gamma(w)\big)\Big)
= 
h_{\rho \otimes \tau}\Big(\rho(x)^{n-1}(v) \otimes \tau(x)^{m-2}\big(\gamma(w)\big)\Big)
,
$$
and
\begin{multline*}
\mu \Big(
\lambda^{m-2} \rho(x)^{n-1}(v) \otimes \tau(x)^{m-2}\big(\gamma(w)\big)
+ 
\lambda^{n-2} \rho(x)^{n-2}\big(\beta(v)\big) \otimes \tau(x)^{m-1}(w)
\Big)
\\= 
h_{\rho \otimes \tau} \Big(
\lambda^{m-2} \rho(x)^{n-1}(v) \otimes \tau(x)^{m-2}\big(\gamma(w)\big)
+
\lambda^{n-2} \rho(x)^{n-2}\big(\beta(v)\big) \otimes \tau(x)^{m-1}(w)
\Big) .
\end{multline*}

Taking a linear combination of the first two of these equalities with 
coefficients $\lambda^{n-2}$ and $\lambda^{m-2}$, and subtracting the third one,
we get that the linear map 
\begin{equation}\label{eq-lin}
\big((h_\rho - \mu\id) \circ \beta\big) \otimes \tau(x) 
+
\rho(x) \otimes \big((h_\tau - \mu\id) \circ \gamma\big)
\end{equation}
is identically zero on the vector space 
$\rho(x)^{n-2}(V) \otimes \tau(x)^{m-2}(W)$, whence 
$$
(h_\rho - \mu\id) \circ \beta = \eta_{\rho,\tau} \rho(x)
$$
and 
$$
(h_\tau - \mu\id) \circ \gamma = -\eta_{\rho,\tau} \tau(x)
$$
for some $\eta_{\rho,\tau} \in K$, as linear maps on $\rho(x)^{n-2}(V)$ and on 
$\tau(x)^{m-2}(W)$, respectively. Note that this holds for any pair of 
representations $\rho, \tau$. Taking $\rho = \tau$, we get 
$\eta_{\rho,\rho} = - \eta_{\rho,\rho}$, whence $\eta_{\rho,\rho} = 0$ (this is
the place where we need the assumption that characteristic of the ground field
is different from $2$), and
$$
h_\rho\Big(\beta\big(\rho(x)^{n-2}(v)\big)\Big) = 
\mu \beta\big(\rho(x)^{n-2}(v)\big)
$$
for any $v \in V$. Using here again (\ref{eq-b}), and taking $\beta^{-1}(v)$ 
instead of $v$, we get
$$
h_\rho\Big(\rho(x)^{n-2}(v)\Big) = \mu \rho(x)^{n-2}(v)
$$
for any finite-dimensional nilpotent multiplicative nondegenerate representation
$\rho$ of $L$\footnote{
The corresponding reasoning in the proof of \cite[Lemma 2.10]{ado} (p.~678 in 
the published version, and p.~4 in the arXiv version) is in error: at one place,
the sign went wrong -- the linear maps $(h_\rho - \lambda \id) \otimes \tau(x)$
and $\rho(x) \otimes (h_\tau - \lambda \id) \otimes \tau(x)$ do not coincide, 
but sum up to zero (compare with the expression (\ref{eq-lin}) here). The 
correct reasoning is obtained from one in the present paper by assuming all the
twist maps are equal to identity map. In fact, the correct reasoning is slightly
simpler than the original one, and avoids the necessity to assume non-vanishing
of certain binomial coefficients. This is significant, as it eliminates one of 
the places where the characteristic zero assumption is needed (see the 
discussion in \cite[\S 3]{ado}).
}.

Repeating this procedure by considering at the $k$th step the condition 
(\ref{eq-h}) for the tensor product of two representations $\rho$ and $\tau$, 
and applying it to all elements of the form $\rho(x)^i(v) \otimes \tau(x)^j(w)$
with $i+j$ equal to $n+m-(k+2)$, where $n$ and $m$ are the indices of nilpotency
of $\rho(x)$ and $\tau(x)$ respectively, and heavily using (\ref{eq-b}), we 
consecutively arrive at the equalities
$$
h_\rho\Big(\rho(x)^{n-k}(v)\Big) = \mu \rho(x)^{n-k}(v)
$$
for each $k=1,2,\dots,n$. For $k=n$ this means $h_\rho = \mu \id$, and hence
$\rho(y - \mu x) = 0$ for any finite-dimensional nilpotent multiplicative
nondegenerate representation $\rho$ of $L$, what implies $y - \mu x = 0$, a 
contradiction.
\end{proof}

It seems that the conclusion of Lemma~\ref{l-c} is valid for arbitrary linearly
independent elements $x,y$ of the Hom-Lie algebra, without the assumption 
$\alpha(x) = \lambda x$. However, this assumption greatly simplifies the 
calculations, and we will not venture into attempting to give a proof without 
it; Lemma~\ref{l-c} as stated is enough for our purposes here.

Finally, we arrive at our main result, whose proof is assembled from the 
previous lemmas exactly in the same way as the proof of 
\cite[Theorem 2.11]{ado}.

\begin{theorem}\label{th-ado}
A finite-dimensional nilpotent multiplicative nondegenerate Hom-Lie algebra over
an algebraically closed field of characteristic zero has a finite-dimensional 
faithful nilpotent multiplicative nondegenerate representation.
\end{theorem}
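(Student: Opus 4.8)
The plan is to reduce the construction of a single faithful representation to the much weaker task of producing, for each nonzero element separately, a representation that does not annihilate it, and then to glue these together via a direct sum.

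First I would invoke Proposition~\ref{prop-nilp}: since $L$ is finite-dimensional, multiplicative, and defined over an algebraically closed field, nilpotency upgrades to \emph{strong} nilpotency, giving a flag of ideals $0=I_n\vartriangleleft\cdots\vartriangleleft I_1=L$ with one-dimensional successive quotients and $[L,I_i]\subseteq I_{i+1}$. The smallest nonzero ideal $I_{n-1}$ is then one-dimensional and central, spanned by some $z$; being $\alpha$-invariant and one-dimensional it satisfies $\alpha(z)=\lambda z$, with $\lambda\neq 0$ because $\alpha$ is nondegenerate. By Lemma~\ref{l-l} it suffices to exhibit, for every nonzero $x\in L$, a finite-dimensional nilpotent multiplicative nondegenerate representation $\rho_x$ with $\rho_x(x)\neq 0$, since the direct sum of such representations is faithful and all three adjectives are inherited by direct sums.

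Next I would dispose of the ``generic'' elements by induction on $\dim L$, the one-dimensional case being an easy explicit construction. A minimal $\alpha$-invariant ideal is necessarily one-dimensional, central, and spanned by an $\alpha$-eigenvector, since its bracket with $L$ is a strictly smaller $\alpha$-invariant ideal, hence zero. Consequently, if $x$ lies outside some nonzero $\alpha$-invariant ideal $I$, then $L/I$ is again finite-dimensional nilpotent multiplicative nondegenerate — nondegeneracy survives because the bijective $\alpha$ carries the invariant $I$ onto itself — and of smaller dimension, so by the inductive hypothesis it carries a faithful representation whose pullback along $L\to L/I$ is nonzero on $x$. This settles every $x$ except those lying in \emph{every} nonzero ideal, that is, the case in which $Kz$ is the unique minimal ideal (so necessarily $z\in[L,L]$) and $x\in Kz$.

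The crux — and the step I expect to be the main obstacle — is therefore to produce one representation of $L$ not killing the central generator $z$, precisely when $Kz$ sits inside every nonzero ideal and so no proper quotient can detect it. Here I would use that $L$ is a homomorphic image of the finite-dimensional $\mathbb N$-graded free nilpotent multiplicative algebra $\mathcal{M}_{k,n,f}$, with $f$ the minimal polynomial of $\alpha$ (which has nonzero free term by nondegeneracy), and that by Lemma~\ref{l-free} this free cover admits a faithful nilpotent multiplicative nondegenerate representation. Since the free cover satisfies the hypothesis of the separation Lemma~\ref{l-c}, applying that lemma on the cover with an $\alpha$-eigenvector distinct from $z$ in the role of its first (eigenvector) argument and a lift of $z$ in the role of its second forces a representation whose kernel does not contain that of $z$, and in particular one that is nonzero on $z$; the eigenvector hypothesis $\alpha(z)=\lambda z$ with $\lambda\neq 0$ is exactly what Lemma~\ref{l-c} demands. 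The genuinely delicate point is that the defining ideal of the presentation $\mathcal{M}_{k,n,f}\to L$ need not be homogeneous, so one cannot simply push a graded representation of the free cover down to $L$: reconciling the abundance of representations on the free cover (where faithfulness, and hence Lemma~\ref{l-c}, is available) with the requirement that the final representation \emph{live on $L$ itself} is where the characteristic-zero and nondegeneracy assumptions do their real work through the tensor-product bookkeeping internal to Lemma~\ref{l-c}, mirroring the correspondingly subtle step in the proof of \cite[Theorem~2.11]{ado}.
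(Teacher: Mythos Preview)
Your reduction via Lemma~\ref{l-l} matches the paper, and your inductive treatment of elements $x$ that survive in some proper quotient $L/I$ is sound. The gap is exactly where you locate it --- detecting the central eigenvector $z$ when $Kz$ is the unique minimal ideal --- and your sketch does not close it. Applying Lemma~\ref{l-c} on the free cover $\mathcal M_{k,n,f}$ produces representations of the cover, not of $L$; learning that some such representation is nonzero on a lift of $z$ is already immediate from faithfulness (Lemma~\ref{l-free}) and does not need Lemma~\ref{l-c} at all, while nothing in Lemma~\ref{l-c} forces a representation to vanish on the entire presentation kernel $I$ so as to descend to $L$. Your final sentences acknowledge this without resolving it. (Incidentally, the remark that ``$\alpha(z)=\lambda z$ is exactly what Lemma~\ref{l-c} demands'' is misplaced: in your proposed application the lift of $z$ sits in the \emph{second} slot, where no eigenvector hypothesis is required.)

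The paper's device is a different inductive setup. The induction runs on $\dim I$ in a fixed presentation $L=\mathcal M_{k,n,f}/I$, not on $\dim L$. Strong nilpotency of $\mathcal M_{k,n,f}$ (via Lemma~\ref{lemma-ij}) gives $J\subset I$ of codimension one with $[\mathcal M_{k,n,f},I]\subseteq J$, so $\widetilde L=\mathcal M_{k,n,f}/J$ is a one-dimensional \emph{central extension} of $L$ by some $K\tilde z$, with $\alpha(\tilde z)=\lambda\tilde z$ and $\lambda\ne 0$. Here $\tilde z$ spans the kernel of $\widetilde L\to L$; it is the element one wants to \emph{kill}, not detect. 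By induction $\widetilde L$ already has a faithful nilpotent multiplicative nondegenerate representation, so for each nonzero $x\in L$ one applies Lemma~\ref{l-c} on $\widetilde L$ with $\tilde z$ in the \emph{first} (eigenvector) slot and a lift $\tilde x$ in the second, obtaining $\rho$ with $\Ker\rho(\tilde z)\not\subset\Ker\rho(\tilde x)$. Because $\tilde z$ is central and an $\alpha$-eigenvector, $\Ker\rho(\tilde z)$ is an $\widetilde L$-submodule on which $\tilde z$ acts by zero --- hence automatically an $L$-module --- on which $x$ acts nontrivially. The descent from the free cover to $L$ thus happens one central dimension at a time, and Lemma~\ref{l-c} is used at each step to kill that single extra direction while keeping the chosen element alive; this is the opposite orientation to how you propose to deploy it.
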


\begin{proof}
Let us present a finite-dimensional nilpotent multiplicative nondegenerate 
Hom-Lie algebra $L$ defined over an algebraically closed field $K$ of 
characteristic zero, as a quotient $\mathcal M_{k,n,f}/I$ for suitable 
$k,n \in \mathbb N$ and polynomial $f$ (with the nonzero free term). We will 
proceed by induction on the dimension of $I$. The case $I = 0$ is covered by 
Lemma~\ref{l-free}(\ref{it-ii}).

Suppose $I$ is nonzero. By Proposition~\ref{prop-nilp} $\mathcal M_{k,n,f}$ is
strongly nilpotent, and by Lemma~\ref{lemma-ij} there is an ideal $J$ of 
$\mathcal M_{k,n,f}$ such that $J \subset I$, $\dim I/J = 1$, and 
$[\mathcal M_{k,n,f},I] \subseteq J$. Consequently, 
$\widetilde L = \mathcal M_{k,n,f}/J$ is an extension of $L$ by an 
one-dimensional central ideal, say, $K\tilde{z}$. Since $K\tilde{z}$ is an ideal
in the Hom-Lie algebra $(\widetilde L, \liebrack, \alpha)$, we have 
$\alpha(\tilde{z}) = \lambda \tilde{z}$ for some $\lambda \in K$; and since 
$\mathcal M_{k,n,f}$ is finite-dimensional and nondegenerate, so is 
$\widetilde L$, what implies $\lambda \ne 0$.

Take an arbitrary nonzero $x\in L$, and consider its preimage $\tilde{x}$ in 
$\widetilde L$. By the induction assumption, $\widetilde L$ has a 
finite-dimensional faithful nilpotent multiplicative nondegenerate 
representation, and by Lemma~\ref{l-c} there is a finite-dimensional nilpotent
multiplicative nondegenerate representation $\rho: \widetilde L \to \End(V)$ 
such that 
\begin{equation}\label{eq-k}
\Ker\rho(\tilde{z}) \not\subset \Ker\rho(\tilde{x}) .
\end{equation}

Since $\tilde{z}$ lies in the center of $\widetilde L$, for any 
$\tilde y \in \widetilde L$ we have 
$$
\rho(\alpha(\tilde y)) \circ \rho(\tilde z) = 
\lambda \rho(\tilde z) \circ \rho(\tilde y) ,
$$
and hence the vector space $\Ker \rho(\tilde z)$ is an $\widetilde L$-submodule
of $V$, and thus also carries a natural structure of an $L$-module on which $x$
acts nontrivially, due to (\ref{eq-k}). Since the initial representation $\rho$
of $\widetilde L$ is nilpotent, multiplicative, and nondegenerate, the ensuing 
representation $\tau$ of $L$ is nilpotent, multiplicative, and nondegenerate 
too.

Thus we see that for any nonzero $x\in L$ there is a finite-dimensional 
nilpotent multiplicative nondegenerate representation $\tau$ of $L$ such that 
$\tau(x) \ne 0$, and by Lemma~\ref{l-l} $L$ has a finite-dimensional faithful 
nilpotent multiplicative nondegenerate representation.
\end{proof}

\section*{Acknowledgements}

This work was started during the visit of the second author to Universit\'e de 
Haute-Alsace.


\begin{thebibliography}{GMMP}

\bibitem[ABM]{arfa} A. Arfa, N. Ben Fraj, and A. Makhlouf,
\emph{Morphisms cohomology and deformations of Hom-algebras},
J. Nonlin. Math. Phys. \textbf{25} (2018), no.~4, 589--603; arXiv:1710.07599.

\bibitem[BM]{benayadi-m} S. Benayadi and A. Makhlouf,
\emph{Hom-Lie algebras with symmetric invariant nondegenerate bilinear forms},
J. Geom. Phys. \textbf{76} (2014), 38--60; arXiv:1009.4226.

\bibitem[Bi]{birkhoff} G. Birkhoff, 
\emph{Representability of Lie algebras and Lie groups by matrices},
Ann. Math. \textbf{38} (1937), no.~2, 526--532; reprinted in 
\emph{Selected Papers on Algebra and Topology}, Birkh\"auser, 1987, 332--338.

\bibitem[Bo]{bourbaki} N. Bourbaki, 
\emph{Groupes et alg\`ebres de Lie. Chapitre 1}, Hermann, Paris, 1972; reprinted
by Springer, 2007.

\bibitem[CG]{caen-goyv} S. Caenepeel and I. Goyvaerts, 
\emph{Monoidal Hom-Hopf algebras}, 
Comm. Algebra \textbf{39} (2011), no.~6, 2216--2240; arXiv:0907.0187.

\bibitem[CH]{chen-han} X. Chen and W. Han,
\emph{Classification of multiplicative simple Hom-Lie algebras},
J. Lie Theory \textbf{26} (2016), no.~3, 767--775.

\bibitem[GMMP]{graziani-et-al} G. Graziani, A. Makhlouf, C. Menini, and 
F. Panaite, 
\emph{BiHom-associative algebras, BiHom-Lie algebras and BiHom-bialgebras},
SIGMA \textbf{11} (2015), 086; arXiv:1505.00469.

\bibitem[GZZ]{gzz} L. Guo, B. Zhang, and S. Zheng, 
\emph{Universal enveloping algebras and Poincar\'e-Birkhoff-Witt theorem for involutive Hom-Lie algebras},
J. Lie Theory \textbf{28} (2018), no.~3, 735--756; arXiv:1607.05973.

\bibitem[HMS]{hellstrom-et-al} L. Hellstr\"om, A. Makhlouf, and S.D. Silvestrov,
\emph{Universal algebra applied to Hom-associative algebras, and more},
Algebra, Geometry and Mathematical Physics (ed. A. Makhlouf et al.), 
Springer Proc. Math. Stat. \textbf{85} (2014), 157--199; arXiv:1404.2516.

\bibitem[J]{jacobson} N. Jacobson, 
\emph{A note on automorphisms and derivations of Lie algebras},
Proc. Amer. Math. Soc. \textbf{6} (1955), no.~2, 281--283; reprinted in
\emph{Collected Mathematical Papers}, Vol. 2, Birkh\"auser, 1989, 251--253.

\bibitem[LMT]{hom-hopf} C. Laurent-Gengoux, A. Makhlouf, and J. Teles,
\emph{Universal algebra of a Hom-Lie algebra and group-like elements},
J. Pure Appl. Algebra \textbf{222} (2018), no.~5, 1139--1163; arXiv:1505.02439.

\bibitem[MM]{mm} A. Makhlouf and M. Mehidi, 
\emph{On classification of filiform Hom-Lie algebras}; arXiv:1807.08324.

\bibitem[MZ]{mz} A. Makhlouf and P. Zusmanovich, 
\emph{Hom-Lie structures on Kac-Moody algebras}, 
J. Algebra \textbf{515} (2018), 278--297; arXiv:1805.00187.

\bibitem[MS08]{MS08}
A. Makhlouf and S. Silvestrov, \emph{Hom-algebra structures}, 
J. Gen. Lie Theory Appl. \textbf{2} (2008), no.~2, 51--64; arXiv:math/0609501.

\bibitem[MS10]{makhlouf-silv} A. Makhlouf and S. Silvestrov,
\emph{Notes on $1$-parameter formal deformations of Hom-associative and Hom-Lie algebras},
Forum Math. \textbf{22} (2010), no.~4, 715--739; arXiv:0712.3130.

\bibitem[R]{razmyslov} Yu.P. Razmyslov, \emph{Identities of Algebras and Their Representations}, Nauka, 1989 (in Russian); AMS, 1994 (English translation).

\bibitem[S]{sheng} Y. Sheng, \emph{Representations of Hom-Lie algebras},
Algebras Repr. Theory \textbf{15} (2012), no.~6, 1081--1098; \newline 
arXiv:1005.0140.

\bibitem[SX]{sheng-x} Y. Sheng and Z. Xiong, \emph{On Hom-Lie algebras},
Lin. Multilin. Algebra \textbf{63} (2015), no.~12, 2379--2395; \newline
arXiv:1411.6839.

\bibitem[Y09]{Yau} D. Yau, \emph{Hom-algebras and homology}, 
J. Lie Theory \textbf{19} (2009), no.~2, 409--421; arXiv:0712.3515.

\bibitem[Y12]{yau-mikheev} D. Yau, 
\emph{The Mikheev identity in right Hom-alternative algebras}; arXiv:1205.0148.

\bibitem[Z]{ado} P. Zusmanovich, \emph{Yet another proof of the Ado theorem},
J. Lie Theory \textbf{26} (2016), no.~3, 673--681; arXiv:1507.02233.

\end{thebibliography}
\end{document}